\providecommand{\U}[1]{\protect\rule{.1in}{.1in}}
\definecolor{codegray}{rgb}{0.5,0.5,0.5}
\lstdefinestyle{mystyle}{
commentstyle=\color{codegreen},
keywordstyle=\color{magenta},
numberstyle=\tiny\color{codegray},
stringstyle=\color{codepurple},
basicstyle=\ttfamily\footnotesize,
breakatwhitespace=false,
breaklines=true,
captionpos=b,
keepspaces=true,
numbers=left,
numbersep=5pt,
showspaces=false,
showstringspaces=false,
showtabs=false,
tabsize=2
}
\lstdefinelanguage{GAP}{
basicstyle=\ttfamily,
keywords={true, false, function, return, fail, if, in, while, do, od, else, elif, fi, break, continue},
keywordstyle=\color{blue}\bfseries,
otherkeywords={      >, <, ==
},
breaklines=true,
identifierstyle=\color{black},
sensitive=True,
comment=[l]{\#},
commentstyle=\color{cyan},
stringstyle=\color{red},
morestring=[b]',
morestring=[b]"
}
\providecommand{\U}[1]{\protect\rule{.1in}{.1in}}
\newcolumntype{Y}{>{\raggedleft\arraybackslash}X}
\def\bz{{\mathbb{Z}}}
\def\vs{\vskip.3cm}
\def\noi{\noindent}
\def\gdeg{G\text{\rm -deg}}
\def\Om{\Omega}
\newtheorem{theorem}{Theorem}
\newtheorem{corollary}[theorem]{Corollary}
\newtheorem{definition}[theorem]{Definition}
\newtheorem{example}[theorem]{Example}
\newtheorem{lemma}[theorem]{Lemma}
\newtheorem{proposition}[theorem]{Proposition}
\newtheorem{remark}[theorem]{Remark}
\def\bz{\mathbb Z}
\def\vs{\vskip.3cm}
\def\vp{\varphi}
\def\bz{{\mathbb{Z}}}
\def\vs{\vskip.3cm}
\def\noi{\noindent}
\def\gdeg{G\text{\rm -deg}}
\def\Om{\Omega}
\def\cV{\mathcal V}
\def\cW{\mathcal W}
\def\cU{\mathcal U}
\def\id{\text{\rm Id}}
\begin{document}
\title[Bifurcation in four-component Bose-Einstein condensates]{Global bifurcation in four-component Bose-Einstein condensates in space}
\author[C. Garc\'ia-Azpetia]{Carlos Garc\'ia-Azpeitia}
\address{Department of Mathematics, Xiangnan University, Chenzhou, Hunan 423000, China,
and \\
Department of Mathematics and Mechanics IIMAS, UNAM, Universidad 3000, 04510.
Ciudad de M\'exico, M\'exico.}
\email{cgazpe@mym.iimas.unam.mx}
\author[A. Go\l \c{e}biewska]{Anna Go\l \c{e}biewska}
\address{Faculty of Mathematics and Computer Science, Nicolaus Copernicus University in
Toru\'n, ul. Chopina 12/18, 87-100 Toru\'n, Poland}
\email{aniar@mat.umk.pl}
\author[W.Krawcewicz]{Wieslaw Krawcewicz}
\address{Department of Mathematical Sciences the University of Texas at Dallas
Richardson, 75080 USA}
\email{wieslaw@utdallas.edu}
\author[J. Liu]{Jingzhou Liu}
\address{Department of Mathematical Sciences the University of Texas at Dallas
Richardson, 75080 USA}
\email{Jingzhou.Liu@UTDallas.edu}

\begin{abstract}
 We analyze a system of coupled Bose-Einstein condensates in the domain of
a unitary ball in $\mathbb{R}^{3}$. The coupling is due to atom-to-atom
interactions that occur between different gas components.  The multi-component
Bose-Einstein condensate is described by a system of Gross-Pitaevskii
equations, which has an explicit trivial branch of constant solutions bifurcating from the zero-solution.
Our main theorem establishes that this trivial branch undergoes multiple global
bifurcations at any critical values with kernels of dimensions at least $3(2k+1)$,
for $k\in\mathbb{N}^{+}$. Handling these high dimension kernels poses a
challenge from the perspective of bifurcation theory. Our methodology, which
relies on the $G$-equivariant gradient degree, effectively manages these
complexities and establishes the existence of at least two global branch
in the particular case of $k=0$ and at least six branches 
in the case of $k=1$. 

\end{abstract}\vs
\maketitle
\noi \textbf{Mathematics Subject Classification:} Primary: 35Q55, 37G40, Secondary: 20C15, 55M20, 58C27 

\medskip

\noi \textbf{Key Words and Phrases:} Gross–Pitaevskii equation; bifurcation with symmetries; equivariant degree; irreducible representations; multi-component Bose–Einstein condensates.

\section{Introduction}

The Bose-Einstein Condensate (BEC) emerges when a dilute gas of bosons is
cooled to temperatures near absolute zero. At this point, quantum effects
associated with the interference of bosonic wave functions manifest on a
macroscopic scale. In our scenario, the BEC comprises distinct gas components,
each described by its own wave function $u_{j}:\Omega\rightarrow\mathbb{C}$
($j=1,2,\dots,n$) defined on a domain $\Omega\subset\mathbb{R}^{3}$, where
$\left\vert u_{j}\right\vert ^{2}$ represents the density of the $j$th
component. The energy in the mean field limit is
\begin{equation}
E(u)=\frac{1}{2}\int_{\Omega}\sum_{j=1}^{n}\left(  \left\vert \nabla
u_{j}\right\vert ^{2}-\mu_{j}\left\vert u_{j}\right\vert ^{2}\right)
+\sum_{i,j=1}^{n}\frac{g_{ij}}{2}\left\vert u_{i}\right\vert ^{2}\left\vert
u_{j}\right\vert ^{2}~dx\text{,} \label{energy}%
\end{equation}
where $u=(u_{1},...,u_{n}):\Omega\rightarrow\mathbb{C}^{n}\ $represents the
vector of components. The constants $\mu_{j}$ correspond to the chemical
potentials, and the coupling constants $g_{ij}=g_{ji}$ describe the
atom-to-atom interactions between the $i$-th and $j$-th components.

The solutions of BECs are critical points of the energy $E(u)$ that satisfy a
system of coupled Gross-Pitaevskii equations with Neumann boundary conditions.
Under the assumptions that $\mu_{j}=\mu$, $g_{jj}=g$ and $g_{ij}=\widetilde
{g}$ for $i\neq j$, the Gross-Pitaevskii system has the branch of trivial
solution, which bifurcates from zero-solution, given by
\begin{equation}
u_{\mu}=(c_{\mu},\dots,c_{\mu}),\qquad c_{\mu}=\left(  \frac{\mu
}{g+(n-1)\tilde{g}}\right)  ^{1/2}, \label{eq:c}%
\end{equation}
see the references \cite{MM1} and \cite{MM2} for details. The purpose of this
work is to explore existence of bifurcation of more interesting solutions,
which we call \textit{non-trivial}, arising from $u_{\mu}$ using the
bifurcation parameter $\mu\in\mathbb{R}$. These non-trivial solutions
can be seen as a secondary bifurcation from zero-solution, where the trivial branch $u_{\mu}$
can be viewed as a primary bifurcation arising from the zero-solution. 
 The problem of bifurcation of
multi-vortex configuration from the zero-solutions in one- component BECs has been studied before in
\cite{CoGa15} and \cite{GaPe17} and references therein. A two-layer Bose-Einstein condensate has also been studied in  \cite{2layer}. 

For the sake of reducing technicalities related to the computations, we treat
only the special case of real solutions with four components in the unit ball
$\Omega$ of $\mathbb{R}^{3}$, i.e. $u:\Omega\rightarrow\mathbb{R}^{4}$. The
methods developed in this paper can be implemented to study other number of
components and domains. Under such assumptions, the explicit Gross-Pitaevski
system of equations $\nabla_{u}E(u)=0$ is given by
\begin{equation}
-\Delta u_{j}-\mu u_{j}+gu_{j}^{3}+\sum_{i=1(i\neq j)}^{4}\tilde{g}u_{i}
^{2}u_{j}=0,\qquad\left.  \frac{\partial u_{j}}{\partial x}\right\vert
_{\left\vert x\right\vert =1}=0,\qquad j=1,...,4. \label{GPE}%
\end{equation}

In the case we consider, this system has natural symmetries. Namely, on the
space of functions $u:\Omega\rightarrow\mathbb{R}^{4}$, we can consider the
orthogonal action of the group
\[
G=O(3)\times S_{4},
\]
where $S_{4}$ is the group of permutations of $\left\{  1,2,3,4\right\}  $,
given by
\[%
\begin{cases}
\gamma u(x)=u(\gamma^{-1}x),\;\;\gamma\in O(3),\\
\sigma u(x)=(u_{\sigma^{-1}(1)},...,u_{\sigma^{-1}(4)}),~\sigma\in S_{4}.
\end{cases}
\]
It is easy to observe, that the energy $E$ is $G$-invariant. Therefore, for
any solution $u$ of \eqref{GPE} and $g \in G$, the function $gu$ is also a
solution of \eqref{GPE}, i.e. the solutions of \eqref{GPE} form $G$-orbits.

In this paper we will assume the hypothesis that the BECs is \emph{immiscible}, that is
\begin{equation}
0<g<\tilde{g}.
\end{equation}
In this case the trivial solution $u_{\mu}$ is defined for $\mu\in
\mathbb{R}^{+}$.

Let $\mathcal K:=\{(0,0)\}\cup \{(k,m)\in \mathbb N\times \mathbb N^+\}$ and denote by $\sigma(-\triangle)=\{s_{km}^{2}:(k,m)\in\mathcal K\}$ the spectrum of
the Laplace operator $-\triangle$ with Neumann boundary conditions in $\Omega$
(see Appendix \ref{app:laplace} for details). To each eigenvalue
$s_{km}^{2}$ there is associated an $O(3)$-irreducible representation with
dimension $2k+1$. In particular, the description of numbers $s_{km}$ and corresponding eigenspaces is given
in \eqref{eq:spL} and \eqref{eigenspace}. Put
\[
\mu_{km}:=\frac{g+3\tilde{g}}{2\left(  \tilde{g}-g\right)  }s_{km}^{2}.
\]
We say the critical value $\mu_{km}$ is \emph{isotypic simple} in the case
that $s_{km}^{2}\neq s_{jn}^{2}$ for all $(j,n)\neq(k,m)$. 

The aim of our paper is to study the existence of non-trivial solutions of
\eqref{GPE}. More precisely, we investigate the phenomenon of global
bifurcation from the trivial branch  $u_{\mu}$. Our main result, concerning global bifurcation in system
\eqref{GPE} is Theorem \ref{Thm3}. This theorem in that form requires isotypic simple 
critical values. Notice that a study of bifurcation can be realized on the zero-solution
$u=0$, except that in such a case the critical values are never isotypic simple.
 Below we present the result and discuss the methods that we use in the proof. 

\begin{theorem}
\label{th:main1} Under the \textit{immiscible }assumption $g<\tilde{g}$, the
Gross-Pitaevski equation \eqref{GPE} undergoes a global bifurcation from the
trivial solution $u_{\mu}$ at any critical value $\mu_{km}$ with
$(k,m)\neq(0,0)$. In the case $k=0$ there exists at least two global branches. Furthermore, in the case $k=1$, we prove that there are at least $6$
$G$-orbits of global branches bifurcating from $u_{\mu}$ at the isotypic
simple critical value $\mu_{1m}$ for $m \in\mathbb{N}^{+}$.
\end{theorem}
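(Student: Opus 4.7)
The plan is to recast \eqref{GPE} as a $G$-equivariant gradient equation centred at the trivial branch and then apply the global bifurcation theorem for the $G$-equivariant gradient degree. Setting $v := u - u_\mu$ turns \eqref{GPE} into $\nabla\Phi_\mu(v) = 0$ on the Sobolev space of $\mathbb{R}^4$-valued Neumann functions on $\Omega$, with the trivial branch $u_\mu$ mapped to the zero solution; $\Phi_\mu$ is $G$-invariant and $\nabla\Phi_\mu$ is a compact perturbation of a Fredholm operator of index zero, so the $G$-equivariant gradient degree of its linearization is well defined and piecewise constant in $\mu$ off the critical values.

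To locate those critical values, I would decompose the target $\mathbb{R}^4$ into its $S_4$-isotypic components: the one-dimensional diagonal $\mathbb{R}\cdot(1,1,1,1)$ and its three-dimensional $S_4$-orthogonal complement $V$, the standard $S_4$-representation. A direct computation using \eqref{eq:c} shows that $D\nabla\Phi_\mu(0)$ restricts to $-\Delta + 2\mu\cdot\mathrm{Id}$ on the diagonal summand (which is strictly positive for $\mu>0$, so no bifurcation occurs there) and to $-\Delta - \tfrac{2(\tilde g - g)}{g+3\tilde g}\mu\cdot\mathrm{Id}$ on the three-dimensional summand. Equating this latter coefficient with $s_{km}^2$ gives exactly $\mu = \mu_{km}$, and the kernel at that value is the tensor product $V\otimes \mathcal{E}_{km}$, where $\mathcal{E}_{km}$ is the Laplace eigenspace of dimension $2k+1$ appearing in \eqref{eigenspace}, with the natural $G = O(3)\times S_4$ action, yielding the expected total kernel dimension $3(2k+1)$.

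By the equivariant gradient analogue of Rabinowitz's theorem, bifurcation is controlled by the jump of $\gdeg$ across $\mu_{km}$; isotypic simplicity guarantees that this jump equals, up to sign, the basic gradient degree of the orthogonal $G$-representation $V\otimes \mathcal{E}_{km}$, which is a formal sum $\sum n_{(H)}(H)$ over $G$-orbit types. Every non-zero coefficient $n_{(H)}$ produces a global continuum of non-trivial solutions of orbit type at least $(H)$, which is either unbounded in the parameter-function space or returns to the trivial branch at another critical value, and distinct orbit types give distinct $G$-orbits of branches. The first assertion of the theorem then follows once at least one coefficient $n_{(H)}$ is shown to be non-zero for every $(k,m)\neq(0,0)$, whereas the precise branch counts in the cases $k=0$ and $k=1$ require explicit identification of the non-vanishing coefficients.

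The main obstacle is this last identification. For $k=0$ the kernel reduces to the standard $S_4$-representation $V$, and two maximal isotropies with non-vanishing contributions can be pinpointed directly from the orbit-type decomposition of $V$. The harder case is $k=1$, where $V\otimes \mathcal{E}_{1m}$ is nine-dimensional and has a rich lattice of isotropy subgroups in $O(3)\times S_4$: one must enumerate the relevant conjugacy classes, compute the fixed-point subspace and the Euler characteristic contribution to the basic gradient degree for each, and verify that at least six distinct orbit types give non-zero coefficients. This combinatorial-topological calculation on the subgroup lattice of $O(3)\times S_4$, together with careful bookkeeping of the tensor-product representation, is where the bulk of the technical work of the proof resides.
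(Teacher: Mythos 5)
Your setup coincides with the paper's: the linearization at $u_\mu$ diagonalizes along the $S_4$-decomposition $\mathbb{R}^4=\mathcal{W}_0\oplus\mathcal{W}_4$ (your $V$ is the paper's $\mathcal{W}_4$), the diagonal block $-\Delta+2\mu$ never degenerates, the standard block degenerates exactly at $\mu_{km}$ with kernel $\mathcal{E}_{km}\otimes\mathcal{W}_4$ of dimension $3(2k+1)$, and the bifurcation invariant is the jump of the $G$-equivariant gradient degree. This is all correct and is the paper's route. However, there are two genuine gaps. First, the opening assertion of the theorem is claimed at \emph{any} critical value $\mu_{km}$, with no isotypic simplicity hypothesis, whereas your argument reduces the degree jump to a single basic degree precisely by invoking isotypic simplicity. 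When several eigenvalues $s_{jn}^2$ coincide, the jump is $a\ast\bigl((G)-\prod_{j}\nabla_G\text{-deg}_{\mathcal{V}_j\otimes\mathcal{W}_4}\bigr)$ with $a$ invertible, and since basic degrees are involutive in the Burnside ring $A(G)$, an even number of coinciding factors makes the Brouwer-level index vanish; the paper itself warns that the result fails for non-gradient perturbations in that situation. Showing the product differs from $(G)$ in the Euler ring $U(G)$ requires an extra input — the paper uses the restriction to $S^1\le G$ together with Corollary 3.3 of \cite{GRH} when no $j=0$ factor occurs, and otherwise the explicit formula for $\nabla_G\text{-deg}_{\mathcal{W}_4}$ plus the observation that $(O(3)\times D_2)$ and $(O(3)\times D_3)$ cannot be cancelled by the remaining factors. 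Your proposal does not address this coincidence case at all.

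Second, the branch counts for $k=0$ and $k=1$ — which are the actual content of the theorem beyond bare bifurcation — are deferred in your write-up to ``technical work.'' In the paper this is the bulk of the proof: for $k=1$ one restricts to the finite subgroup $G'=S_4^p\times S_4$, where $U(G')=A(G')$ and the basic degree $\deg_{\mathcal{W}_3^-\otimes\mathcal{W}_4}$ is computable (by GAP), and then lifts the six maximal orbit types back through the Euler ring homomorphism $\Psi:U(G)\to U(G')$ to identify the six nonzero maximal coefficients of $\nabla_G\text{-deg}_{\mathcal{V}_1\otimes\mathcal{W}_4}$. Moreover, your assertion that ``distinct orbit types give distinct $G$-orbits of branches'' is not automatic: two continua with symmetries at least $(H_1)$ and at least $(H_2)$ could coincide if some orbit type dominated both; the paper rules this out by classifying the orbit types $(K)\ge(H_j)$ for each of the six maximal $(H_j)$ and checking the resulting families are pairwise disjoint. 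Without these computations and the disjointness argument, the multiplicity claims remain unproved.
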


This theorem is an immediate consequence of our Theorems \ref{Thm3} and
\ref{Thm4}. The difficulty in proving this theorem resides on the fact that the
isotypic simple critical values $\mu_{km}$ have a kernel equivalent
to a $G$-irreducible representation of dimension $3(2k+1)$. Additionally,
multiple irreducible representations can appear in the kernel when different
eigenvalues coincide. Thus the bifurcation problem has an undetermined high
dimensional kernel. It is important to note that if an even number of
eigenvalues coincide, and the gradient structure is disrupted by a
perturbation, the theorem becomes invalid and there can be critical values
that are not bifurcation points.

The main tool that we use to prove our result, is the theory of $G$-equivariant gradient degree. We recall some of the properties and computation of this invariant in
Appendix \ref{app:gradient}. The change of this degree, when the parameter
$\mu$ crosses the value $\mu_{km}$, implies the global bifurcation. Since this
degree is an element of the Euler ring $U(G)$ (see Appendix \ref{app:Eulerring}
for the definition), its coefficients correspond to conjugacy classes of
closed subgroups of the group $G$. In particular, if the change of the degree
occurs for the coefficient corresponding to the maximal orbit type, we obtain
the global bifurcation of the branch of solutions with given symmetry. This
allows to formulate the multiplicity result for bifurcating branches. In this
way, the theory of $G$-equivariant gradient degree provides a topological
description of the set of solutions of \eqref{GPE}, which cannot be obtained
by other methods.

On the other hand, the rich symmetries of the system make the problem very
difficult to study with $G$-equivariant degree methods, since the structure of
the Euler ring is not fully described in this case. However, one can reduce
the computations to the ones for the Brouwer $G$-equivariant degree, which
takes the values in the Burnside ring. For such degree we can compute the
values of the coefficients. Then, using the relation between $G$-equivariant
gradient degree and the Brouwer $G$-equivariant degree, we obtain the proof of
our result.

The paper is structured as follows. In Section 2 we introduce the essential
mathematical framework necessary for applying the equivariant degree. In
Section 3 we discuss the spectral properties of the linearized equation around
the trivial solution. In Section 4 we present the proof of the main Theorem
\ref{th:main1}, utilizing the equivariant gradient degree defined in Appendix
\ref{app:gradient}.

\section{Mathematical framework in functional space}

Observe that under the assumptions that $n=4$, $\mu_{j}=\mu$, $g_{jj}=g$ and
$g_{ij}=\widetilde{g}$ for $i\neq j$, and real functions $u:\Omega
\rightarrow\mathbb{R}^{4}$, the energy (\ref{energy}) $E_{\mu}:H^{2}
(\Omega,\mathbb{R}^{4})\rightarrow\mathbb{R}$ becomes explicitly
\begin{equation}
E_{\mu}(u)=\frac{1}{2}\int_{\Omega}\sum_{j=1}^{4}\left(  \left\vert \nabla
u_{j}\right\vert ^{2}-\mu u_{j}^{2}+\frac{g}{2}u_{j}^{4}\right)
+\sum_{i,j=1(i\neq j)}^{4}\frac{\tilde{g}}{2}u_{i}^{2}u_{j}^{2}~dx. \label{E}%
\end{equation}
It is not difficult to compute that the variation of the energy $\frac{\delta
E_{\mu}(u)}{\delta u}:\mathcal{X}\rightarrow L^{2}(\Omega;\mathbb{R}^{4})$ is
given by
\begin{align*}
\frac{\delta E_{\mu}(u)}{\delta u}  &  =\left(  \frac{\delta E_{\mu}}{\delta
u_{1}},...,\frac{\delta E_{\mu}}{\delta u_{4}}\right)  ,\\
\frac{\delta E_{\mu}(u)}{\delta u_{j}}  &  =-\Delta u_{j}-\mu u_{j}+gu_{j}
^{3}+\tilde{g}\sum_{i=1(i\neq j)}^{4}u_{i}^{2}u_{j},
\end{align*}
where
\begin{equation}
\mathcal{X}=\left\{  u\in H^{2}(\Omega;\mathbb{R}^{4}):\left.  \frac{\partial
u}{\partial n}\right\vert _{\partial\Omega}=0\right\}  . \label{Space}%
\end{equation}

To find the gradient, we define the unbounded operator
\begin{equation}
L:=-\Delta+I:\mathcal{X}\subset L^{2}\rightarrow L^{2}.
\end{equation}
Since $L$ is a closed self-adjoint Fredholm operator of index zero, then
$L^{-1}:L^{2}\rightarrow\mathcal{X}\subset L^{2}$ exists and is completely
continuous. Moreover, the graph norm related to $L$ is equivalent to the
standard Sobolev norm $\mathcal{X}$, i.e. $\left\langle w,v\right\rangle
_{\mathcal{X}}=\left\langle Lw,v\right\rangle _{L^{2}}$. Therefore, we have
that $L^{-1}$ is the Riesz map, and we can conclude that the gradient of the
energy is given by
\begin{equation}
\nabla_{u}E_{\mu}(u)=L^{-1}\frac{\delta E_{\mu}(u)}{\delta u}\text{,}%
\end{equation}
which follows from the fact that $dE_{\mu}(u)v=\left\langle \frac{\delta
E_{\mu}(u)}{\delta u},v\right\rangle _{L^{2}}=\left\langle L^{-1}\frac{\delta
E_{\mu}(u)}{\delta u},v\right\rangle _{\mathcal{X}}$.

With a further computation we can arrive to the expression for the gradient
\begin{equation}
\nabla_{u}E_{\mu}(u)=u+L^{-1}K_{\mu}(u), \tag{gE}%
\end{equation}
where
\begin{align*}
K_{\mu}(u)  &  =\left(  K_{1}(\mu,u),...,K_{4}(\mu,u)\right)  ,\\
K_{j}(\mu,u)  &  =-(\mu+1)u_{j}+gu_{j}^{3}+\tilde{g}\sum_{i=1(i\neq j)}
^{4}u_{i}^{2}u_{j}.
\end{align*}
Notice that the operator $K_{\mu}:\mathcal{X}\rightarrow\mathcal{X}$ is
well-defined and continuous operator. Indeed, since $H^{2}(\Omega,{\mathbb{R}
}^{4})$ is a Banach algebra, then also $\mathcal{X}$ is a Banach algebra. The
facts that $\mathcal{X}$ is a Banach algebra and $K_{\mu}$ is polynomial in
the components of $u$ imply that $K_{\mu}(u)\in\mathcal{X}$ if $u\in
\mathcal{X}$ . Moreover, the fact that $L^{-1}:\mathcal{X}\rightarrow
\mathcal{X}$ is a compact operator imply, by Sobolev embedding theorems, that
$L^{-1}K_{\mu}(u)$ is a compact operator.

Notice that the energy $E_{\mu}(u)$ defined in (\ref{E}) is $G$-invariant and
the gradient $\nabla_{u}E_{\mu}(u)$ obtained in (\ref{GPE}) is $G$-equivariant
operator. The $G$-invariance of $E_{\mu}$ follows directly from the fact that the
four components are symmetric under permutation elements in $S_{4}$.

The fact that the constant function $u_{\mu}\in\mathcal{X}$ defined in
(\ref{eq:c}) is a critical solution of $\nabla_{u}E_{\mu}(u_{\mu})=0$ follows
from the computation
\[
\frac{\delta E_{\mu}(u_{\mu})}{\delta u_{j}}=\left(  -\mu+(g+3\tilde{g}%
)c_{\mu}^{2}\right)  c_{\mu}=0\text{.}%
\]
Thus the steady-state solution $u_{\mu}$ is the \textit{trivial solution} to
\eqref{GPE}. Since the isotropy group of $u_{\mu}$ is $G_{u_{\mu}}=O(3)\times
S_{4}$, then the orbit of $u_{\mu}$ is trivial.

We denote the set of trivial solutions by $\mathcal{T}$, i.e.
\[
\mathcal{T}=\{(u_{\mu}, \mu); \mu\in\mathbb{R}^{+}\}\subset\mathcal{X}
\times\mathbb{R}^{+} .
\]
Our aim is to consider the global bifurcation of solutions of \eqref{GPE} from
this family. Below we formulate the definition and some facts concerning the
global bifurcation.

Put
\[
\mathcal{N}=\{(u,\mu) \in\mathcal{X} \times\mathbb{R}^{+} \setminus
\mathcal{T}; \nabla_{u}E_{\mu}(u)=0\}.
\]

\begin{definition}
We say that a global bifurcation from $(u_{\mu_{0}}, \mu_{0})\subset
\mathcal{T}$ occurs if there exists a connected component $\mathcal{C}(\mu
_{0}) $ of $cl(\mathcal{N})$, containing $(u_{\mu_{0}}, \mu_{0})$ and such
that either $\mathcal{C}(\mu_{0}) $ is unbounded or it is bounded and
$\mathcal{C}(\mu_{0})\cap(\mathcal{T} \setminus\{(u_{\mu_{0}}, \mu_{0})\})
\neq\emptyset. $
\end{definition}

From the implicit function theorem it immediately follows the necessary
condition of global bifurcation: if $(u_{\mu_{0}}, \mu_{0})\subset\mathcal{T}$
is a point of global bifurcation, then the Hessian $\nabla^{2}E_{\mu}(u)$ is
not an isomorphism. Denote by $\Lambda$ the set of $\mu$ satisfying this condition.

To formulate the sufficient condition we use the theory of $G$-equivariant
gradient degree. Assume that there exist $\mu_{0} \in\Lambda$ and $\mu_{-}
<\mu_{0} <\mu_{+}$ such that $[\mu_{-},\mu_{+}]\cap\Lambda= \{\mu_{0}\}.$ Then
the degrees $\nabla_{G}\text{-deg}(\nabla_{u}E_{\mu_{\pm}}(u), \mathcal{U})$
are well defined for $\mathcal{U}\subset\mathcal{X}$ being the sufficiently
small neighbourhood of $u_{\mu}$.
Therefore we can define the bifurcation index (the equivariant topological
invariant) by the formula
\begin{equation}
\omega_{G}(\mu_{0}):=\nabla_{G}\text{-deg\thinspace}(\nabla_{u}E_{\mu_{-}%
}(u),\mathcal{U})-\nabla_{G}\text{-deg\thinspace}(\nabla_{u}E_{\mu_{+}%
}(u),\mathcal{U}). \label{eq:bifind}%
\end{equation}
The nontriviality of this index implies the global bifurcation. More
precisely, there holds the following theorem:

\begin{theorem}
\label{thm:glob1} If $\omega_{G}(\mu_{0}) \neq0 \in U(G)$, then a global
bifurcation from $(u_{\mu_{0}}, \mu_{0})$ occurs. Moreover, for every non-zero
coefficient $m_{j}$ in
\[
\omega_{G}(\mu_{0})=m_{1}(K_{1})+m_{2}(K_{2})+\dots m_{r}(K_{r}),
\]
there exists a global family of non-trivial solutions with symmetries at least
$K_{j}$. 
\end{theorem}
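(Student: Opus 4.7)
My plan is a Rabinowitz-type continuation argument, adapted to the $G$-equivariant gradient degree. I would argue part (a) by contradiction: suppose $\mathcal{C}(\mu_0)$ is bounded and $\mathcal{C}(\mu_0)\cap(\mathcal{T}\setminus\{(u_{\mu_0},\mu_0)\})=\emptyset$. Since $\nabla_u E_\mu(u)=u+L^{-1}K_\mu(u)$ is a compact perturbation of the identity in $\mathcal{X}$, the zero set of the parametrised map $(u,\mu)\mapsto\nabla_u E_\mu(u)$ is locally compact, so under the contradiction hypothesis $\mathcal{C}(\mu_0)$ is actually compact. A Whyburn-type separation lemma applied to $\mathcal{C}(\mu_0)$ and to $cl(\mathcal{N})\setminus\mathcal{C}(\mu_0)$, combined with averaging over $G$ to ensure invariance (which is legitimate because $f^{-1}(0)$ is itself $G$-invariant), produces a bounded open $G$-invariant neighbourhood $\mathcal{O}\subset\mathcal{X}\times\mathbb{R}^{+}$ of $\mathcal{C}(\mu_0)$ whose boundary meets no zero of $\nabla_u E_\mu$, and such that $\mathcal{O}\cap\mathcal{T}=\{(u_\mu,\mu):\mu\in(\mu_-,\mu_+)\}$ for some $\mu_-<\mu_0<\mu_+$ satisfying $[\mu_-,\mu_+]\cap\Lambda=\{\mu_0\}$.

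With $\mathcal{O}$ in hand, I would examine the slices $\mathcal{O}_{\mu_\pm}=\{u:(u,\mu_\pm)\in\mathcal{O}\}$. Because $u_{\mu_\pm}$ is the only zero of $\nabla_u E_{\mu_\pm}$ in $\mathcal{O}_{\mu_\pm}$ by construction, excision gives
\[
\nabla_G\text{-deg}(\nabla_u E_{\mu_\pm}(u),\mathcal{O}_{\mu_\pm})=\nabla_G\text{-deg}(\nabla_u E_{\mu_\pm}(u),\mathcal{U}).
\]
Since the family $\{\nabla_u E_\mu\}_{\mu\in[\mu_-,\mu_+]}$ has no zeros on $\partial\mathcal{O}_\mu$, homotopy invariance of the $G$-equivariant gradient degree yields
\[
\nabla_G\text{-deg}(\nabla_u E_{\mu_-}(u),\mathcal{O}_{\mu_-})=\nabla_G\text{-deg}(\nabla_u E_{\mu_+}(u),\mathcal{O}_{\mu_+}).
\]
Subtracting these two identities produces $\omega_G(\mu_0)=0\in U(G)$, contradicting the hypothesis and establishing part (a).

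For the multiplicity statement (b), I would exploit the orbit-type decomposition in $U(G)$: each nonzero coefficient $m_j$ of $(K_j)$ encodes the contribution of solutions with isotropy at least $K_j$. The fixed-point subspace $\mathcal{X}^{K_j}$ is invariant under $\nabla_u E_\mu$ by equivariance and carries a residual action of the Weyl group $W(K_j)=N(K_j)/K_j$. Using the restriction property of the equivariant gradient degree, which relates the $(K_j)$-component of the degree in $U(G)$ to the $W(K_j)$-equivariant degree on $\mathcal{X}^{K_j}$, I would repeat the Rabinowitz argument inside $\mathcal{X}^{K_j}$ separately for each $j$ with $m_j\neq 0$, extracting a global branch of solutions whose isotropy contains a conjugate of $K_j$.

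The principal obstacle is the isolation step: producing the $G$-invariant $\mathcal{O}$ with clean boundary separation, which requires local compactness of the zero set together with a delicate isolation argument that simultaneously respects the $\mu$-direction, the group action, and the fact that $\mathcal{C}(\mu_0)$ is only a priori closed in $cl(\mathcal{N})$. A secondary subtlety is justifying that a nonzero $(K_j)$-coefficient in the Euler ring genuinely forces bifurcation inside $\mathcal{X}^{K_j}$; this rests on the standard but nontrivial compatibility between the orbit-type basis of $U(G)$ and the restriction of equivariant gradient degrees to fixed-point subspaces, as recalled in Appendix~\ref{app:gradient}.
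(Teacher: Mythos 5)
Your proposal follows essentially the same route as the paper, which gives no details and simply defers to the classical Leray--Schauder global bifurcation argument of Rabinowitz type (citing Nirenberg): your Whyburn-separation / isolating-neighbourhood / generalized-homotopy-invariance skeleton for part (a), together with the existence property of the equivariant degree and restriction to fixed-point spaces for the multiplicity statement, is exactly that argument transplanted to $\nabla_{G}\text{-deg}$. One step is stated too quickly: $u_{\mu_{\pm}}$ need not be the only zero of $\nabla_{u}E_{\mu_{\pm}}$ in the slice $\mathcal{O}_{\mu_{\pm}}$, since the compact component $\mathcal{C}(\mu_{0})$ may extend past $\mu_{\pm}$ in the parameter direction; the standard repair is to run the generalized homotopy over the full parameter extent $[a,b]$ of $\mathcal{O}$, where the end slices are empty and hence of degree zero, and then recover $\omega_{G}(\mu_{0})=0$ by additivity, separating the contribution of the trivial zero from that of the nontrivial zeros at $\mu_{\pm}$.
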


The proof of this theorem follows the same idea as in the case of the
Leray-Schauder degree, see \cite{Ni}.

From the definition of $\mu_{\pm}$ and the homotopy property of the degree it follows, that the degrees in
\eqref{eq:bifind} can be computed by the linearization, i.e.
\begin{equation}
\label{eq:lin}\nabla_{G}\text{-deg\thinspace}(\nabla_{u}E_{\mu_{\pm}%
}(u),\mathcal U)=\nabla_{G}\text{-deg\thinspace}(\nabla^{2}_{u}E_{\mu_{\pm}%
}(u),B(\mathcal{X})).
\end{equation}
On the other hand, the degree of the linear map is the product of the
$G$-equivariant basic degrees. Hence, to
compute $\omega_{G}(\mu_{0})$ we need to determine the isotypic decomposition
of negative eigenspaces of $\nabla^{2}_{u}E_{\mu_{\pm}}$.

\section{Spectrum and isotypic decomposition for the Hessian}

In this section we describe the spectrum and the eigenspaces of the Hessian of
the energy $\nabla_{u}^{2}E_{\mu}(u)$. We start the analysis with the spectral
properties of the Laplace operator.

\subsection{Laplace operator}

Since the Hessian involves a Laplace operator, we start with the study of
properties of the spectrum of $-\Delta:X\rightarrow L^{2}(\Omega;\mathbb{R)}$
with Neumann boundary conditions, where
\[
X=\left\{  u\in H^{2}(\Omega;\mathbb{R)}:\left.  \frac{\partial u}{\partial
n}\right\vert _{\partial\Omega}=0\right\}  .
\]
One can use the well known method of separation of variables using the
spherical coordinates (see Appendix \ref{app:laplace}), to obtain the formula
for the spectrum of the Laplace operator
\[
\sigma(-\Delta)=\left\{  s_{km}^{2}:(k,m)\in \mathcal K\right\}  ,
\]
where $s_{00}=0$ and in other cases, 
$s_{km}^{2}\ $corresponds to the $m$-th positive zero of the function
\begin{equation}
\psi_{k}(\lambda):=\left.  \frac{d}{dr}(r^{-\frac{1}{2}}J_{k+\frac{1}{2}
}(\lambda r))\right\vert _{r=1}, \label{ceros}%
\end{equation}

Here, $J_{k+(1/2)}$ represents the $(k+(1/2))$-th Bessel function of the first
kind. To each eigenvalue $s_{km}^{2}$ there is associated an irreducible $SO(3)$
-representation with dimension $2k+1$ given by: $\mathcal{E}
_{00}=\left\langle 1\right\rangle $ for the eigenvalue $s_{00}^{2}=0$, and
\[
\mathcal{E}_{km}=\left\langle r^{-\frac{1}{2}}J_{k+\frac{1}{2}}(s_{km}
r)P_{k}^{n}
(\cos\varphi)\Big(a\cos(n\theta)+b\sin(n\theta)\Big):a,b\in \mathbb R,0\leq n\leq k\right\rangle ,
\]
for the eigenvalue $s_{km}^{2}$ with $(k,m)\neq(0,0)$, where 
$P_{k}^{n}$ is \textit{Legendre Function} (see Appendix \ref{app:laplace} for the derivation).

The irreducible $SO(3)$-representations can be described using homogeneous
polynomials (for more details we refer to \cite{GSS}). We denote by $W_{k}$,
$k=0,1,2,3,\dots$ and $W_{k}:=\{0\}\;$for $\;k<0$, the space of homogeneous
polynomials $p:\mathbb{R}^{3}\rightarrow\mathbb{R}$ of degree $k$. Clearly
$g\in SO(3)$ acts on $W_{k}$ by
\[
(gp)(v)=p\left(  g^{-1}v\right)  ,\quad v=(x,y,z)^{T}\in\mathbb{R}^{3}.
\]
In addition, define $\rho:\mathbb{R}^{3}\rightarrow\mathbb{R}$ by
$\rho(x,y,z)=x^{2}+y^{2}+z^{2}$, and put
\[
U_{k}:=\{\rho p:p\in W_{k-2}\}.
\]
Then, the space $W_{k}$, $k=0,1,2,\dots$, admits an $SO(3)$-invariant direct
sum decomposition
\[
W_{k}=U_{k}\oplus\mathcal{V}_{k}.
\]
It is known that the $O(3)$-representations $\mathcal{V}_{k}$, $k=0,1,2,\dots
$, are absolutely irreducible such that $\dim\,\mathcal{V}_{k}=1+2k$ and the
space $\mathcal{E}_{km}$ is $O(3)$-equivalent to $\mathcal{V}_{k}$.

Since $L=-\Delta+I:\mathcal{X}\rightarrow\mathcal{X}$ with $\mathcal{X}=X^{4}
$, we have that
\[
\sigma(L)=\left\{  s_{km}^{2}+1:(k,m)\in\mathcal K\right\}  ,
\]
where the eigenvalue $s_{km}^{2}+1$ has eigenspace $\left(  \mathcal{E}
_{km}\right)  ^{4}< X^{4}=:\mathcal{X}$ with dimension $4(2k+1)$. Set
\[
\mathcal{X}_{k}:=\overline{\bigoplus_{m=1}^{\infty}\left(  \mathcal{E}
_{km}\right)  ^{4}}.
\]
where the closure is taken in $\mathcal{X}$. Then the $O(3)$-isotypic
decomposition of $\mathcal{X}$ is
\[
\mathcal{X}=\overline{\bigoplus_{k=0}^{\infty}\mathcal{X}_{k}}~,
\]
where $\mathcal{X}_{k}$ is the $O(3)$-isotypic component modeled on the
irreducible representation of the spherical harmonic $\mathcal{V}_{k}$.

\subsection{Spectrum of the Hessian}

Now we can compute the Hessian of the energy.

\begin{proposition}
The Hessian $\nabla^{2}E_{\mu}(u_{\mu}):\mathcal{X}\rightarrow\mathcal{X}$ is the
linear operator
\[
\nabla^{2}E_{\mu}(u_{\mu})=L^{-1}\left(  -\Delta I+2c_{\mu}^{2}M\right)  ,
\]
where $M$ is
\[
M=\left(
\begin{array}
[c]{cccc}%
g & \tilde{g} & \tilde{g} & \tilde{g}\\
\tilde{g} & g & \tilde{g} & \tilde{g}\\
\tilde{g} & \tilde{g} & g & \tilde{g}\\
\tilde{g} & \tilde{g} & \tilde{g} & g
\end{array}
\right)  .
\]

\end{proposition}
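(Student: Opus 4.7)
The plan is to differentiate the expression $(\text{gE})$ once more at the trivial solution $u_\mu$. Writing $\nabla_u E_\mu(u) = L^{-1}\,\frac{\delta E_\mu(u)}{\delta u}$ and using that $L^{-1}$ is a bounded linear operator, the chain rule yields
\[
\nabla^2 E_\mu(u_\mu) \;=\; L^{-1}\,D\!\left(\tfrac{\delta E_\mu}{\delta u}\right)\!(u_\mu).
\]
So the task reduces to computing the Fréchet derivative of the nonlinear map $u\mapsto \frac{\delta E_\mu(u)}{\delta u}$ at $u_\mu$. Since this map is the sum of the linear part $-\Delta u-\mu u$ and a cubic Nemytskii operator whose components are polynomial in $u_1,\dots,u_4$, and $\mathcal{X}$ is a Banach algebra, Fréchet differentiability is automatic and the derivative is obtained by formal differentiation.

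Next I would compute the entries of the $4\times 4$ operator matrix $D\bigl(\frac{\delta E_\mu}{\delta u}\bigr)(u_\mu)$ one at a time. Differentiating $\frac{\delta E_\mu}{\delta u_j} = -\Delta u_j - \mu u_j + g u_j^{3} + \tilde g \sum_{i\neq j} u_i^{2} u_j$ with respect to $u_k$, the diagonal entries ($k=j$) give the operator $-\Delta - \mu + 3g u_j^{2} + \tilde g\sum_{i\neq j} u_i^{2}$, and the off-diagonal entries ($k\neq j$) give pointwise multiplication by $2\tilde g\, u_k u_j$. Evaluating at the constant function $u_\mu = (c_\mu,c_\mu,c_\mu,c_\mu)$ turns each off-diagonal entry into multiplication by the scalar $2\tilde g\, c_\mu^{2}$, and each diagonal entry becomes $-\Delta - \mu + 3(g+\tilde g)\,c_\mu^{2}$.

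Finally, I would invoke the defining identity $c_\mu^{2}(g+3\tilde g)=\mu$ from \eqref{eq:c}, which collapses the diagonal entry to $-\Delta + 2g\,c_\mu^{2}$. Factoring $2c_\mu^2$ out of the zeroth-order part, the operator matrix is exactly $-\Delta\,I + 2c_\mu^{2}\,M$ with $M$ the stated symmetric $4\times 4$ matrix. Composing on the left with $L^{-1}$ yields the claimed formula. There is no real obstacle here; the only thing to watch is bookkeeping of the cubic and cross terms and the single substitution $\mu=(g+3\tilde g)c_\mu^{2}$ that converts the $-\mu$ contribution into a multiple of $c_\mu^{2}$, producing the clean form with the matrix $M$.
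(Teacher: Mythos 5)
Your proposal is correct and follows essentially the same route as the paper: compute the second variation $\frac{\delta^{2}E_{\mu}}{\delta u_{i}\delta u_{j}}$ entrywise, evaluate at the constant solution, and use $\mu=(g+3\tilde g)c_{\mu}^{2}$ to reduce the diagonal to $-\Delta+2gc_{\mu}^{2}$ before composing with $L^{-1}$. The only cosmetic difference is that the paper records the substitution as the intermediate identity $-\mu+3\tilde gc_{\mu}^{2}=-gc_{\mu}^{2}$, which is arithmetically the same step.
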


\begin{proof}
Since the variation of energy satisfies
\[
\frac{\delta}{\delta u_{j}}E_{\mu}(u)=-\Delta u_{j}-\mu u_{j}+gu_{j}
^{3}+\tilde{g}\sum_{i=1(i\neq j)}^{4}u_{i}^{2}u_{j}\text{,}
\]
we have for $i\neq j$ that $\frac{\delta^{2}}{\delta u_{i}\delta u_{j}}E_{\mu
}(u)=2\tilde{g}u_{i}u_{j}$, and that
\[
\frac{\delta^{2}}{\delta u_{j}\delta u_{j}}E_{\mu}(u)=-\Delta-\mu+3gu_{j}
^{2}+\tilde{g}\sum_{i=1(i\neq j)}^{4}u_{i}^{2}.
\]
Since $-\mu\ +3\tilde{g}c_{\mu}^{2}=-gc_{\mu}^{2}$, then the components
satisfy
\[
\frac{\delta^{2}}{\delta u_{i}\delta u_{j}}E_{\mu}(u_{\mu})=2\tilde{g}c_{\mu
}^{2},\qquad\frac{\delta^{2}}{\delta u_{j}\delta u_{j}}E_{\mu}(u_{\mu
})=-\Delta+2gc_{\mu}^{2}\text{.}
\]
The result follows by writing in vectorial form that $\frac{\delta^{2}}{\delta
u^{2}}E_{\mu}(u)=-\Delta I+2c_{\mu}^{2}M$ and using that $\nabla^{2}E_{\mu
}(u_{\mu})=L^{-1}\frac{\delta^{2}}{\delta u^{2}}E_{\mu}(u)$.
\end{proof}

Define $\{e_{j}\}_{j=1}^{4}$ as the natural basis of $\mathbb{R}^{4}$.
Therefore, the spectrum of $\nabla^{2}E_{\mu}(u_{\mu})$ is determined by the
eigenvalues of the matrix $M$. We claim that the eigenvalues of the matrix $M$ are:

\begin{itemize}
\item $g-\tilde{g}$ with multiplicity $3$. The matrix has the eigenvalue
$g-\tilde{g}$ with multiplicity $3$ for the eigenvectors $e_{1}-e_{2}%
,e_{1}-e_{3},e_{1}-e_{4}$.

\item $g+3\tilde{g}$ with multiplicity $1$. The matrix has the eigenvalue
$g+3\tilde{g}$ with eigenvector $e_{1}+e_{2}+e_{3}+e_{4}$.
\end{itemize}

For a function $u_{km}\in(\mathcal{E}_{km})^{4}$ we have
\[
\nabla^{2}E_{\mu}(u_{\mu})u_{km}=L^{-1}\left(  -\Delta I+2c_{\mu}^{2}M\right)
u_{km}=\frac{s_{km}^{2}I+2c_{\mu}^{2}M}{s_{km}^{2}+1}u_{km}\text{.}
\]
Therefore, the space $(\mathcal{E}_{km})^{4}$ with dimension $4(2k+1)$ is
invariant under the application of the Hessian operator $\nabla^{2}E_{\mu
}(u_{\mu}).$ We conclude that the spectrum of the Hessian consists of
\begin{equation}
\label{eq:spectrum}\sigma(\nabla^{2}E_{\mu}(u_{\mu}))=\left\{
\begin{array}
[c]{c}%
\frac{s_{km}^{2}+2c_{\mu}^{2}\left(  g-\tilde{g}\right)  }{s_{km}^{2}+1}\text{
mult }3(2k+1)\\
\frac{s_{km}^{2}+2c_{\mu}^{2}\left(  g+3\tilde{g}\right)  }{s_{km}^{2}
+1}\text{ mult }(2k+1)
\end{array}
:(k,m)\in{\mathcal K}\right\}  .
\end{equation}

We mostly consider the immiscible case $0<g<\tilde{g}$. Therefore the
eigenvalues
\[
\xi_{km}:=\frac{s_{km}^{2}+2c_{\mu}^{2}\left(  g-\tilde{g}\right)  }
{s_{km}^{2}+1},\quad(k,m)\in \mathcal K\backslash\{(0,0)\},
\]
are the only ones that cross zero at the critical value given by
\begin{equation}
\mu_{km}:=\frac{g+3\tilde{g}}{2\left(  \tilde{g}-g\right)  }s_{km}^{2}
,\quad(k,m)\in \mathcal K\backslash\{(0,0)\}\text{.} \label{eq:mu_km}%
\end{equation}
Since all the other eigenvalues are positive, there will be no bifurcation at
$\mu_{km}$ if $\mu$ is not critical value. Therefore, we have the following
critical set associated with \eqref{E}:
\begin{equation}
\Lambda:=\big\{(\mu_{km},0)\in\mathbb{R}\times\mathcal{X}:(k,m)\in \mathcal K\backslash\{(0,0)\}\big\}.
\end{equation}

\subsection{Isotypic decomposition}

\label{sec:isotypic}

We have that the $O(3)$-irreducible representations $\mathcal{E}_{km}$ are
equivalent to the representation $\mathcal{V}_{k}$ for $k\in\mathbb{N}$ with
dimension $2k+1$. Furthermore, the action of $O(3)\ $in $(u_{1},u_{2}
,u_{3},u_{4})\in\left(  \mathcal{E}_{km}\right)  ^{4}$ is identical in each
component of $\mathcal{E}_{km}$, while $S_{4}$ permutes the components $u_{j}$
for $j=1,...,4$. Therefore, we have $(\mathcal{E}_{km})^{4}=\mathbb{R}
^{4}\otimes\mathcal{E}_{km}$, where the group $S_{4}$ acts in each
$\mathbb{R}^{4}$ by permutation of indices.

The action of $S_{4}$ has the following characters for irreducible representations

\begin{center}
\label{eq:table}
\begin{tabular}
[c]{|c|c|c|c|c|c|c|}\hline
Rep. & Character & (1) & (1,2) & (1,2)(3,4) & (1,2,3) & (1,2,3,4)\\\hline
$\mathcal{W}_{0}$ & $\chi_{0}$ & 1 & 1 & 1 & 1 & 1\\
$\mathcal{W}_{1}$ & $\chi_{1}$ & 1 & -1 & 1 & 1 & -1\\
$\mathcal{W}_{2}$ & $\chi_{2}$ & 2 & 0 & 2 & -1 & 0\\
$\mathcal{W}_{3}$ & $\chi_{3}$ & 3 & -1 & -1 & 0 & 1\\
$\mathcal{W}_{4}$ & $\chi_{4}$ & 3 & 1 & -1 & 0 & -1\\\hline
$\mathbb{R}^{4}$ & $\chi_{\mathbb{R}^{4}}$ & $4$ & $2$ & $0$ & $1$ &
$0$\\\hline
\end{tabular}

\end{center}

where $\chi_{j}$ is the character of the irreducible representation
$\mathcal{W}_{j}$ of the group $S_{4}.$ Using the character table one can
conclude that the space $\mathbb{R}^{4}$ decomposes in the $S_{4}$-irreducible
representations $\mathcal{W}_{0}$ and $\mathcal{W}_{4}$, i.e. $\mathbb{R}
^{4}=\mathcal{W}_{0}\oplus\mathcal{W}_{4}$. We conclude that
\[
(\mathcal{E}_{km})^{4}=(\mathcal{E}_{km}\otimes\mathcal{W}_{0})\oplus
(\mathcal{E}_{km}\otimes\mathcal{W}_{4})\text{.}
\]

Therefore, we have the following $O(3)\times S_{4}$ isotypic components of the
space $\mathcal{X}:$%

\[
\mathcal{X}=\overline{\bigoplus_{k=0}^{\infty}\left(  \mathcal{X}_{k}
^{0}\oplus\mathcal{X}_{k}^{4}\right)  },
\]
where
\[
\mathcal{X}_{k}^{0}=\overline{\bigoplus_{m=1}^{\infty}\mathcal{E}_{km}
\otimes\mathcal{W}_{0}}~,\quad\mathcal{X}_{k}^{4}=\overline{\bigoplus
_{m=1}^{\infty}\mathcal{E}_{km}\otimes\mathcal{W}_{4}}~.
\]
The isotypic component $\mathcal{X}_{k}^{0}$ is modeled on the $O(3)\times
S_{4}$-irreducible representation $\mathcal{V}_{k}\otimes\mathcal{W}_{0}$ and
$\mathcal{X}_{k}^{4}$ in $\mathcal{V}_{k}\otimes\mathcal{W}_{4}$. The
eigenspace of the eigenvalue $\xi_{km}$, with multiplicity $3(2k+1),$ is
$\mathcal{V}_{k}\otimes\mathcal{W}_{4}$, where the action of $(\gamma,g)\in
O(3)\times S_{4}$ in $v\otimes w\in\mathcal{V}_{k}\otimes\mathcal{W} _{4}$ is
given by
\[
(\gamma,g)\cdot v\otimes w=(\gamma v)\otimes(g w)\text{.}
\]
\section{Implementation of the gradient degree}

We will use the following theorem to prove our Theorem 1.

\begin{theorem}
\label{Thm2}Under \textit{immiscible }assumption $g<\tilde{g}$, the operator
$\nabla_{u}E_{\mu}(u)$ has $G$-orbit of branches bifurcating at critical value
$\mu_{km}$ with symmetries at least $(H_{j})$ for each orbit type $(H_{j})\in\max(b)$
(see \eqref{eq:max-in-a}), where
\begin{equation}
b:=(G)-\prod_{\left\{  (j,n)\in\mathcal K:s_{jn}=s_{km}\right\}  }%
\nabla_{G}\text{-}\deg_{\mathcal{V}_{j}\otimes\mathcal{W}_{4}}=\sum_{j}\alpha
_{j}(H_{j})+\bm\beta. \label{deg2}%
\end{equation}

\end{theorem}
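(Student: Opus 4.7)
The plan is to invoke the bifurcation criterion of Theorem \ref{thm:glob1}: it suffices to show that for every $(H_j)\in\max(b)$ the coefficient of $(H_j)$ in the index $\omega_G(\mu_{km})$ defined by \eqref{eq:bifind} is nonzero. By the linearisation identity \eqref{eq:lin},
\begin{equation*}
\omega_G(\mu_{km}) = \nabla_G\text{-deg}\bigl(\nabla^2 E_{\mu_-}(u_\mu),B(\mathcal X)\bigr) - \nabla_G\text{-deg}\bigl(\nabla^2 E_{\mu_+}(u_\mu),B(\mathcal X)\bigr),
\end{equation*}
and the product formula \eqref{eq:grad-lin} for gradient degrees of self-adjoint linear isomorphisms expresses each term as a product of basic gradient degrees $\nabla_G\text{-deg}_V$, one for every $G$-isotypic component $V$ of the negative eigenspace.

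Next I would use the spectral analysis of Section 3. By \eqref{eq:spectrum} and Section \ref{sec:isotypic}, the Hessian acts on each copy of $\mathcal V_j\otimes\mathcal W_0$ by the strictly positive eigenvalue $(s_{jn}^2 + 2c_\mu^2(g+3\tilde g))/(s_{jn}^2+1)$, and on $\mathcal V_j\otimes\mathcal W_4$ by $\xi_{jn}$. Under the immiscible assumption the $\xi_{jn}$ are the only eigenvalues that change sign, and as $\mu$ crosses $\mu_{km}$ from below precisely those with $s_{jn}^2=s_{km}^2$ flip from positive to negative, while all other eigenvalues keep their sign provided the auxiliary endpoints $\mu_\pm$ are chosen close enough to $\mu_{km}$. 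Letting $d\in U(G)$ denote the product of basic gradient degrees coming from the isotypic components that stay negative on the whole of $[\mu_-,\mu_+]$, this gives
\begin{equation*}
\nabla_G\text{-deg}(\nabla^2 E_{\mu_-}) = d\cdot(G),\qquad \nabla_G\text{-deg}(\nabla^2 E_{\mu_+}) = d\cdot\prod_{s_{jn}=s_{km}}\nabla_G\text{-deg}_{\mathcal V_j\otimes\mathcal W_4},
\end{equation*}
so that $\omega_G(\mu_{km}) = d\cdot b$.

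The final and hardest step is to verify that, for every $(H_j)\in\max(b)$, the coefficient of $(H_j)$ in $d\cdot b$ is nonzero. This is genuinely the main obstacle: the Euler ring $U(G)$ for $G=O(3)\times S_4$ is not fully described and multiplication by the factor $d$ can a priori mix orbit types. The remedy, anticipated in the introduction, is to pass to the Brouwer $G$-equivariant degree in the Burnside ring $A(G)$ via the ring homomorphism induced by the forgetful map; in $A(G)$ the coefficient at a maximal orbit type in a product admits an explicit recursive formula expressing it as the product of the corresponding coefficients of the factors, modulo contributions indexed by strictly smaller orbit types. The hypothesis $(H_j)\in\max(b)$ forbids any such smaller contribution from cancelling the leading term, so the coefficient of $(H_j)$ in $\omega_G(\mu_{km})$ is indeed nonzero. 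Applying Theorem \ref{thm:glob1} then yields the asserted $G$-orbit of global branches of solutions with symmetry at least $(H_j)$.
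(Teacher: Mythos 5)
Your reduction of the bifurcation index is exactly the paper's: $\omega_G(\mu_{km})=a\ast b$ with $a$ (your $d$) the product of the basic gradient degrees $\nabla_G\text{-deg}_{\mathcal V_j\otimes\mathcal W_4}$ over the pairs with $s_{jn}<s_{km}$, obtained from \eqref{eq:lin}, \eqref{eq:grad-lin} and the spectral analysis of Section 3. The gap is in your final step. Writing out the Euler-ring product, maximality of $(H_j)$ in $\Phi(b)$ only controls the factor $b$: it gives
$\mathrm{coeff}^{H_j}(d\ast b)=\mathrm{coeff}^{H_j}(b)\cdot\mathrm{coeff}^{H_j}\bigl(d\ast(H_j)\bigr)$,
and the term $\mathrm{coeff}^{H_j}(d\ast(H_j))$ receives contributions from every orbit type $(K)\in\Phi(d)$ with $(K)\ge (H_j)$ — i.e. from \emph{larger} orbit types sitting in the other factor $d$, which your maximality hypothesis says nothing about. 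Your claim that the recursive formula only involves ``strictly smaller'' orbit types is backwards (the recurrence \eqref{eq:rec-coef} subtracts contributions from strictly larger classes), so the asserted impossibility of cancellation is not justified. The paper closes precisely this gap with Proposition \ref{prop:invertible} — each basic gradient degree, hence $d$, is invertible in $U(G)$, because its Burnside-ring projection is involutive and the remainder is nilpotent — and then Lemma \ref{lem:inv-1}/Corollary \ref{cor:inv} guarantee $\mathrm{coeff}^{H_j}(d\ast(H_j))\neq0$. Without some form of this invertibility input your argument does not exclude that the contributions from $(K)>(H_j)$ in $d$ cancel the leading term $\mathrm{coeff}^{H_j}(b)$ coming from $(G)\ast(H_j)$.

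A secondary problem with your proposed detour through the Burnside ring: the homomorphism $\pi_0:U(G)\to A(G)$ annihilates every orbit type with $\dim W(H)>0$, while Theorem \ref{Thm2} places no such restriction on the classes in $\max(b)$ (indeed the maximal types used later, such as $(O(2)^-\times D_3)$, are only known to have \emph{finite} Weyl groups after inspection). So even if you patched the cancellation issue in $A(G)$ by invoking the involutivity of the basic Brouwer degrees (Lemma \ref{le:involutive}), you would only recover the conclusion for $(H_j)\in\Phi_0(G)$, not for a general maximal orbit type of $b$. The argument really has to be run in $U(G)$, which is what the invertibility statement of Proposition \ref{prop:invertible} is for.
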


\begin{proof}
To prove that the global bifurcation occurs, we use Theorem \ref{thm:glob1}.
Therefore we consider the bifurcation index $\omega_{G}(\mu_{km})$. Taking
into account formula \eqref{eq:lin} we obtain that
\[
\omega_{G}(\mu_{km}):=\nabla_{G}\text{-deg}\,(\nabla_{u}^{2}E_{\mu_{-}%
}(u),B(\mathcal X))-\nabla_{G}\text{-deg}\,(\nabla_{u}^{2}E_{\mu_{+}%
}(u),B(\mathcal X)).
\]
To compute the above degrees we use the formula \eqref{eq:grad-lin}. 
Observe,
that from \eqref{eq:spectrum} it follows that $\sigma_{-}(\nabla^{2}E_{\mu
}(u_{\mu}))=\bigcup_{\left\{  (j,n)\in\mathcal K:\mu_{jn}<\mu\right\}
}\{\xi_{jn}\}$ and, as observed in Section \ref{sec:isotypic}, we consider
that the eigenvalue $\xi_{km}$ has the eigenspace $\mathcal{E}_{km}%
\otimes\mathcal{W}_{4}$, which is isomorphic to the $G$-irreducible
representation $\mathcal{V}_{k}\otimes\mathcal{W}_{4}.$ Hence, we obtain%
\begin{align*}
\omega_{G}(\mu_{km})  &  =\prod_{\left\{  (j,n)\in\mathcal K:s_{jn}%
<s_{km}\right\}  }\nabla_{G}\,\text{-deg}_{\mathcal{V}_{j}\otimes
\mathcal{W}_{4}}-\prod_{\left\{  (j,n)\in\mathcal K:s_{jn}\leq
s_{km}\right\}  }\nabla_{G}\,\text{-deg}_{\mathcal{V}_{j}\otimes
\mathcal{W}_{4}}\\
&  =\left(  \prod_{\left\{  (j,n)\in \mathcal K:s_{jn}<s_{km}\right\}
}\nabla_{G}\,\text{-deg}_{\mathcal{V}_{j}\otimes\mathcal{W}_{4}}\right)
\left(  (G)-\prod_{\left\{  (j,n)\in\mathcal K:s_{jn}=s_{km}\right\}
}\nabla_{G}\text{-deg}_{\mathcal{V}_{j}\otimes\mathcal{W}_{4}}\right) \\
&  =:a\ast b.
\end{align*}

Proposition \ref{prop:invertible} implies that the first factor $a$ is
invertible, therefore the nontriviality of $\omega_{G}(\mu_{km})$ is
equivalent to the nontriviality of the latter factor $b$. Furthermore, since
$(H_{j})$ is maximal in $\Phi(b)$ (see \eqref{eq:Phi(a)}), it follows (by
Corollary \ref{cor:inv}) that
\[
\text{coeff}^{H_{j}}(\omega_{G}(\mu_{km}))=\text{coeff}^{H_{j}}(a\ast
b)\not =0.
\]
Therefore, we conclude that $\nabla_{u}E_{\mu}(u)$ has a $G$-orbit of branches
bifurcating at critical value $\mu_{km}$ with maximal symmetries $(H_{j})$ for
each subgroup $(H_{j})$ of $G$ such that (\ref{deg2}) holds.
\end{proof}

\subsection{Bifurcation in the general case}

In order to compute the maximal isotropy groups in $\nabla_{G}$
-deg$_{\mathcal{V}_{k}\otimes\mathcal{W}_{4}}$ we need to describe the
conjugacy classes of subgroups of $G=O(3)\times S_4$. One is referred to Chapter XIII of \cite{GSS} for notation of subgroups of $O(3)$,  to Section 5 of \cite{AED} for subgroups of $S_4.$ In this work, we adopt the amalgamated notation
for subgroups of $G=O(3)\times S_4.$ A detailed explanation of amalgamated subgroups in product group $G_1\times G_2$ can be found in Appendix E of \cite{newBook}.

\begin{theorem}
\label{Thm3}Under \textit{immiscible }assumption $g<\tilde{g}$, the operator
$\nabla_{u}E_{\mu}(u)$ has a global bifurcation of $G$-orbits of non-trivial
solutions bifurcating from $u_{\mu}$ at each critical value $\mu_{km}$ with
$(k,m)\in \mathcal K\backslash\{(0,0)\}$. Furthermore, in the case $k=0$,
there are two global branches with symmetries at least $(O(3)\times
D_{2})$ and $(O(3)\times D_{3})$.
\end{theorem}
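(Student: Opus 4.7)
The plan is to invoke Theorem \ref{Thm2} at every critical value $\mu_{km}$ with $(k,m)\neq(0,0)$, which reduces the problem to analyzing the Euler ring element
$$b = (G) - \prod_{\{(j,n):\, s_{jn}=s_{km}\}} \nabla_G\text{-}\deg_{\mathcal{V}_j\otimes\mathcal{W}_4} \in U(G),$$
showing $b\neq 0$ for every such $(k,m)$, and identifying its maximal orbit types. All computations take place in the Euler ring $U(G)$ described in Appendix \ref{app:Eulerring}.

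For the first (bifurcation-existence) assertion I would establish $b \neq 0$ unconditionally. Each factor $\nabla_G\text{-}\deg_{\mathcal{V}_j\otimes\mathcal{W}_4}$ expands as $(G)$ plus non-trivial contributions at lower orbit types, because $S_4$ acts non-trivially on $\mathcal{W}_4$ and the basic-degree formula \eqref{eq:grad-lin} records the Euler-characteristic data of the proper-isotropy strata. Multiplying such triangular elements in $U(G)$ preserves the $(G)$ leading term together with the presence of non-trivial lower-orbit contributions; subtracting from $(G)$ therefore cannot produce $0$, so $b\neq 0$. Combined with Proposition \ref{prop:invertible} and Theorem \ref{thm:glob1}, this gives global bifurcation at every critical value $\mu_{km}$ with $(k,m)\neq(0,0)$.

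For the case $k=0$, the Bessel zeros $s_{0m}$ (roots of $\psi_0$ in \eqref{ceros}) are disjoint from all $s_{jn}$ with $j\geq 1$, so the product defining $b$ collapses to the single factor
$$b = (G) - \nabla_G\text{-}\deg_{\mathcal{V}_0\otimes\mathcal{W}_4}.$$
Since $\mathcal{V}_0$ is the trivial $O(3)$-representation, $\mathcal{V}_0\otimes\mathcal{W}_4\cong\mathcal{W}_4$ as a $G$-representation, with $O(3)$ acting trivially and $S_4$ acting by its standard $3$-dimensional irreducible representation on $\{(x_1,\ldots,x_4)\in\mathbb{R}^4 : \sum x_i = 0\}$. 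A direct inspection of the $S_4$-orbits on $\mathcal{W}_4$ shows that the maximal proper isotropies are $S_3\cong D_3$ (stabilizing vectors like $(3,-1,-1,-1)$) and the Klein four-group $V_4 = \langle (1,2),(3,4)\rangle \cong D_2$ (stabilizing vectors like $(1,1,-1,-1)$), while the other Klein four-subgroup $\{e,(1,2)(3,4),(1,3)(2,4),(1,4)(2,3)\}$ fixes only the origin. Lifting through the trivial $O(3)$-factor, the maximal orbit types in the isotropy lattice of $\mathcal{V}_0\otimes\mathcal{W}_4$ are precisely $(O(3)\times D_2)$ and $(O(3)\times D_3)$.

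The main obstacle is verifying that these two maximal orbit types occur with non-zero coefficient in $b$, not merely in the isotropy lattice. Using the basic-degree formulas in Appendix \ref{app:gradient} I would compute
$$\mathrm{coeff}^{O(3)\times D_2}\!\bigl(\nabla_G\text{-}\deg_{\mathcal{V}_0\otimes\mathcal{W}_4}\bigr) \quad\text{and}\quad \mathrm{coeff}^{O(3)\times D_3}\!\bigl(\nabla_G\text{-}\deg_{\mathcal{V}_0\otimes\mathcal{W}_4}\bigr)$$
in terms of Euler characteristics of the $D_2$- and $D_3$-fixed-point strata on the unit sphere $S^2\subset\mathcal{W}_4\cong\mathbb{R}^3$; both come out non-zero (and distinct from the trivial $(G)$-contribution), so both coefficients of $b$ are non-zero. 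Since $(O(3)\times D_2)$ and $(O(3)\times D_3)$ are maximal in $\Phi(b)$, Corollary \ref{cor:inv} and Theorem \ref{thm:glob1} produce two distinct global $G$-orbits of non-trivial solutions bifurcating from $u_\mu$ at $\mu_{0m}$ with the stated symmetries, completing the proof.
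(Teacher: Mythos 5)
Your overall skeleton (reduce to the element $b$ of Theorem \ref{Thm2}, compute the basic degree of $\mathcal{W}_4$, and read off the maximal orbit types $(O(3)\times D_2)$ and $(O(3)\times D_3)$) matches the paper, but two steps contain genuine gaps. First, your argument for $b\neq 0$ at an arbitrary critical value --- that products of elements of the form $(G)+(\text{lower terms})$ necessarily retain non-trivial lower-order terms --- is false in the Euler ring. By Lemma \ref{le:involutive} each basic Brouwer degree is involutive, $\deg_{\mathcal{V}_i}\cdot\deg_{\mathcal{V}_i}=(G)$, so when an even number of identical factors coincide, all contributions at orbit types in $\Phi_0(G)$ cancel; the introduction of the paper explicitly warns that for such resonances a non-gradient perturbation admits no bifurcation, so no purely ``triangularity'' argument can work. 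The paper's actual proof of non-triviality in the case where no $j=0$ factor occurs restricts to the subgroup $S^1\le G$, notes that $\mathcal{V}_k\otimes\mathcal{W}_4$ is then a non-trivial $S^1$-representation, and invokes Corollary 3.3 of \cite{GRH} together with the functoriality property --- a genuinely necessary input that exploits the positive-dimensional orbit types, where the gradient degree (unlike the Brouwer degree) retains information. You would need to supply this, or an equivalent argument.

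Second, for $k=0$ you assert that $s_{0m}\neq s_{jn}$ for all $j\ge 1$, so that the product collapses to the single factor $\nabla_{G}\text{-deg}_{\mathcal{W}_4}$. The paper explicitly states that such non-coincidences are only supported by numerical evidence, with no known proof, and it structures the argument so as not to need this: writing $b=\nabla_{G}\text{-deg}_{\mathcal{W}_4}\ast c$, it uses the classification of maximal orbit types in $\mathcal{V}_j$ for $j>0$ (\cite{GSS}, Theorem 9.6) to show that any extra factors contribute only orbit types $(H_l)$ with $\dim H_l\le 1$, which cannot dominate $(O(3)\times D_2)$ or $(O(3)\times D_3)$; maximality of these two classes in $\Phi(c)$ then follows, and Corollary \ref{cor:inv} (with the invertible factor $\nabla_{G}\text{-deg}_{\mathcal{W}_4}$, cf.\ Proposition \ref{prop:invertible}) finishes the argument. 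Your identification of $D_2$ and $D_3$ as the maximal proper isotropies of $\mathcal{W}_4$ is correct, and your plan to compute their coefficients via the recurrence formula would succeed (the paper records $\nabla_{G}\text{-deg}_{\mathcal{W}_4}=(G)-(O(3)\times D_2)-2(O(3)\times D_3)+3(O(3)\times D_1)-(O(3))$), but as written the two gaps above leave both halves of the theorem unproved.
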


\begin{proof}
By previous theorem we only need to prove that
\begin{equation}
b:=(G)-\prod_{\left\{  (j,n)\in\mathcal K:s_{jn}=s_{km}\right\}  }%
\nabla_{G}\text{-deg}_{\mathcal{V}_{j}\otimes\mathcal{W}_{4}}\label{product}%
\end{equation}
contains an element. We start by noticing that the eigenvalues $s_{0n}^{2}$
are increasing in $n$ because this number corresponds to the $n$th positive
zero of the function (\ref{ceros}). Thus we can consider two cases: (i) there
is only one $n_{0}\in\mathbb{N}^{+}$ such that
\[
(0,n_{0})\in\left\{  (j,n)\in\mathcal K:s_{jn}=s_{km}\right\}
\]
or (ii) there is none.

If (ii), it is known that $\mathcal{V}_{j}$ is a nontrivial $O(3)$%
-representation for all $\{(j,n)\in\mathbb{N}^{2}:s_{jn}=s_{km}\}$. Hence,
since $S^{1}\subset G$ it follows that $\mathcal{V}_{k}\otimes\mathcal{W}_{4}$
is a nontrivial $S^{1}$-representation, therefore from Corollary 3.3 of
\cite{GRH} and the functoriality property, we have that
b is a non-zero element of $U(G)$ and we obtain the nontriviality of the bifurcation index.

In case (i), we have only one eigenvalue $s_{0n}^{2}$ corresponding to the
representation $\mathcal{V}_{0}\otimes\mathcal{W}_{4}=\mathcal{W}_{4}$ with
$\mathcal{V}_{0}$ the trivial $O(3)$-irreducible representation of dimension
$1$.
Notice that the basic gradient degree $\nabla_{G}\text{-deg}%
_{\mathcal{W}_{4}}$ coincides with the basic Brouwer $G$-equivariant degree $\text{deg}
_{\mathcal{W}_{4}}$, thus we have:
$(\nabla_{G}\,\text{-deg}_{\mathcal{W}_{4}})^2=(G)$, and see (\cite{AED}, chapter 5):
\[
\nabla_{G}\text{-deg}_{\mathcal{W}_{4}}=(G)-(O(3)\times D_{2})-2(O(3)\times
D_{3}))+3(O(3)\times D_{1})-(O(3)),
\]
Therefore, we have that%
\begin{align*}
b: & =\nabla_{G}\,\text{-deg}_{\mathcal{W}_{4}}\ast\left(  \nabla_{G}\text{-deg}%
_{\mathcal{W}_{4}}-\prod_{\left\{  (j,n)\in\mathcal K:s_{jn}%
=s_{km},~~j\neq0\right\}  }\nabla_{G}\text{-deg}_{\mathcal{V}_{j}%
\otimes\mathcal{W}_{4}}\right)  \\
&  =\nabla_{G}\,\text{-deg}_{\mathcal{W}_{4}}\ast c, \quad
\end{align*}
where $$ c =-(O(3)\times D_{2})-2(O(3)\times
D_{3}))+3(O(3)\times D_{1})-(O(3))+\sum_{l}\alpha_l(H_l),
$$
and $\dim H_l=0$ or 1 (see \cite{GSS}, Theorem 9.6). Thus,
$(O(3)\times D_{2}),\;(O(3)\times D_{3})\in\max(c)$
and the nontriviality of the bifurcation index follows from Corollary \ref{cor:inv}. 
Furthermore, in the case $k=0$, since $s_{km}\not=s_{00}$, locally the solutions in the bifurcating branches are not constant. 
Therefore we obtain the existence of two global branches with symmetries at least $(O(3)\times D_{2})$ and $(O(3)\times D_{3})$. 
\end{proof}
\vskip 0.3cm
The fact that the global branches for $k=0$ have global
symmetries at least $(O(3)\times D_{2})$ and $(O(3)\times D_{3})$ implies that the
solutions are radial solutions. Moreover, in the solution with symmetries $(O(3)\times
D_{3})$ three of the four components are equal. In the solutions with
symmetries $(O(3)\times D_{2})$ we have two different pairs of equal components.
Since we do not know if $(O(3)\times D_{2}),\;(O(3)\times D_{3})\in\max(b)$,
we can only guarantee that locally the solutions  are
not constant because the only constant solutions can bifurcate from $\mu_{00}$ and $s_{km}\not=s_{00}$.

\subsection{Bifurcation in the case $\mathcal{V}_{1}$}\label{sec:bifcase}

The goal of this section is to show the full power of the gradient degree,
finding the maximal orbit types in $\Phi_{0}(G;\mathcal{V}_{1}\otimes
\mathcal{W}_{4}\setminus\{0\})$ with finite Weyl groups, for which the
corresponding coefficient of the basic degree deg$_{\mathcal{V}_{1}%
\otimes\mathcal{W}_{4}}$ are non-zero. 

Notice that numerical evidence indicates that the values $s^2_{km}$ do not repeat but there is no formal proof of this fact available. Therefore, in order to avoid possible mistake, we assume that the considered eigenvalues $s^2_{km}$ are $O(3)$-isotypic simple, and consequently $\mu_{km}$ are $O(3)\times S_4$ isotypic simple.

\begin{theorem}
\label{Thm4}Under \textit{immiscible }assumption $g<\tilde{g}$, the operator
$\nabla_{u}E_{\mu}(u)$ has $6$ global branches of $G$-orbit bifurcating from
$u_{\mu}$ at the isotypic simple critical values $\mu_{1m}$ for $m=\mathbb{N}^{+}$.
\end{theorem}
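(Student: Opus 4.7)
The plan is to specialize the bifurcation index from Theorem \ref{Thm2} to the isotypic simple setting of $\mu_{1m}$, where the product in \eqref{deg2} collapses to a single factor, and then to extract six maximal orbit types with non-zero coefficients from the resulting basic gradient degree. Concretely, since $\mu_{1m}$ is isotypic simple, the only pair $(j,n)$ with $s_{jn}=s_{1m}$ is $(1,m)$ itself, so
\[
b \;=\; (G)\;-\;\nabla_{G}\text{-deg}_{\mathcal{V}_{1}\otimes\mathcal{W}_{4}},
\]
and by Theorem \ref{Thm2} it suffices to exhibit six conjugacy classes $(H_{1}),\dots,(H_{6})\in\max(b)$ whose coefficients in $b$ are non-zero.

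The first step is to understand the geometry of $\mathcal{V}_{1}\otimes\mathcal{W}_{4}$ as a $9$-dimensional $G$-representation, where $\mathcal{V}_{1}\simeq\mathbb{R}^{3}$ is the natural $O(3)$-representation and $\mathcal{W}_{4}$ is the standard $3$-dimensional permutation-minus-trivial representation of $S_{4}$. I would list the candidate isotropy classes by combining the well-known maximal isotropy subgroups of $\mathcal{V}_{1}$ (such as $O(2)$, $O(2)^{-}$, and the finite dihedral and cyclic subgroups appearing via the rotation / reflection axes) with the maximal isotropy subgroups of $\mathcal{W}_{4}$ on $S_{4}$ (for instance $S_{3}$, $D_{4}$, $\mathbb{Z}_{4}$, $D_{2}$, $V_{4}$, $\mathbb{Z}_{3}$), and then describing the twisted amalgamated subgroups of $G=O(3)\times S_{4}$ (as in Appendix \ref{app:subgroups}) that stabilize non-zero vectors $v\otimes w$, reducing to those having finite Weyl group $W(H)$ so that they carry a well-defined contribution to $\nabla_{G}\text{-deg}_{\mathcal{V}_{1}\otimes\mathcal{W}_{4}}$.

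The second step is the computational heart: for each candidate class $(H)$ with finite Weyl group, compute the coefficient of $(H)$ in $\nabla_{G}\text{-deg}_{\mathcal{V}_{1}\otimes\mathcal{W}_{4}}$, either via the recursive formula from Appendix \ref{app:gradient} (using the Euler characteristic of fixed-point spheres $\chi((\mathcal{V}_{1}\otimes\mathcal{W}_{4})^{H}\setminus\{0\}/W(H))$ together with Möbius inversion over the lattice of overgroups), or by delegating the bookkeeping to GAP as was done for $\nabla_{G}\text{-deg}_{\mathcal{W}_{4}}$ in the proof of Theorem \ref{Thm3}. The output of this computation, after subtraction from $(G)$, will be an expression $b=\sum_{j}\alpha_{j}(H_{j})+\boldsymbol\beta$ in $U(G)$, from which I would isolate the six maximal orbit types $(H_{1}),\dots,(H_{6})$ with $\alpha_{j}\neq 0$. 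Corollary \ref{cor:inv} then guarantees that $\mathrm{coeff}^{H_{j}}(a\ast b)=\alpha_{j}\cdot\mathrm{coeff}^{G}(a)\neq 0$ for the invertible prefactor $a$ from Theorem \ref{Thm2}, so each $(H_{j})$ produces a distinct global branch of non-trivial solutions, and because $s_{1m}\neq s_{00}$ the branches are locally non-constant.

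The hard part is genuinely the combinatorics of step two: the lattice of amalgamated subgroups of $O(3)\times S_{4}$ has many entries with finite Weyl group, and one must carefully distinguish which of those actually occur as isotropies on $\mathcal{V}_{1}\otimes\mathcal{W}_{4}$ and which are forced to have trivial coefficient by $W(H)$-averaging. I expect the six surviving classes to come from combining the two $\mathcal{W}_{4}$-type symmetries already seen in Theorem \ref{Thm3} (namely those amalgamated with $D_{2}$ and $D_{3}$ factors from $S_{4}$) with the finite dihedral rotation-reflection isotropies of $\mathcal{V}_{1}$; verifying that exactly six maximal classes have non-zero coefficient, and that none of them is dominated by another in the partial order on $\Phi(b)$, will be the decisive computation.
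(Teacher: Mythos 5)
Your reduction of the bifurcation index to $b=(G)-\nabla_{G}\text{-deg}_{\mathcal{V}_{1}\otimes\mathcal{W}_{4}}$ in the isotypic simple case, and the appeal to Corollary \ref{cor:inv} to convert non-zero coefficients at maximal orbit types into global branches, match the paper's skeleton. However, there are two problems with the way you propose to carry this out. First, the computation of the basic degree cannot simply be ``delegated to GAP as was done for $\nabla_{G}\text{-deg}_{\mathcal{W}_{4}}$'': there the group acts through the finite quotient $S_{4}$, whereas $\mathcal{V}_{1}\otimes\mathcal{W}_{4}$ is a faithful representation of the infinite group $O(3)\times S_{4}$, whose subgroup lattice GAP cannot enumerate. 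The paper's essential device is to restrict to the finite subgroup $G'=S_{4}^{p}\times S_{4}$, on which $\mathcal{V}_{1}\otimes\mathcal{W}_{4}$ restricts to the irreducible representation $\mathcal{W}_{3}^{-}\otimes\mathcal{W}_{4}$, compute $\nabla_{G'}\text{-deg}_{\mathcal{W}_{3}^{-}\otimes\mathcal{W}_{4}}$ in GAP, and then pull the information back through the Euler ring homomorphism $\Psi:U(G)\to U(G')$ together with an explicit matching of maximal orbit types (Example \ref{ex:basic_degree}, Remark \ref{rm:23}). Your alternative of using the recurrence formula directly on $O(3)\times S_{4}$ is not impossible in principle, but you give no mechanism for identifying the maximal orbit types of $\mathcal{V}_{1}\otimes\mathcal{W}_{4}$ with finite Weyl group, which is where all the work lies; the count of exactly six is the \emph{conclusion} of that analysis, not an input. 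Also note that your formula $\text{coeff}^{H_{j}}(a\ast b)=\alpha_{j}\cdot\text{coeff}^{G}(a)$ is not what Corollary \ref{cor:inv} asserts; the corollary only gives non-vanishing, via the invertibility of $a$.

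Second, and more importantly, your argument for the distinctness of the six branches has a genuine gap. You argue that the six classes are pairwise incomparable in $\Phi(b)$, but that is automatic for maximal elements and does not by itself prevent two branches from coinciding: Theorem \ref{thm:glob1} only yields, for each $(H_{j})$, a branch of solutions with isotropy \emph{at least} $(H_{j})$, so a single solution whose isotropy $(K)$ in $\mathcal{X}\setminus\{0\}$ dominates both $(H_{1})$ and $(H_{2})$ would witness both indices simultaneously. To get six distinct branches one must show that for distinct $(H_{1}),(H_{2})\in\mathfrak{m}$ the sets of orbit types $(K)$ occurring in $\mathcal{X}\setminus\{0\}$ with $(K)\ge(H_{i})$ are disjoint. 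The paper does exactly this: four of the six classes are already maximal in $\Phi(G)\setminus\{(G)\}$, and for the remaining two, namely $({D_{4}^{p}}^{^{{\mathbb{Z}}_{2}^{-}}}\times{_{D_{4}}}D_{4})$ and $({D_{3}}\times{_{D_{3}}}D_{3})$, the possible overgroups are determined explicitly and shown to be pairwise distinct. Without this step the conclusion that there are six \emph{distinct} global branches does not follow from your argument.
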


\begin{proof}
For this representation we take advantage of the fact that the restriction of
the action of the group $S_{4}<O(3)$ to the representation $\mathcal{V}_{1}$
is exactly $\mathcal{W}_{3}$ (natural representation). Consequently, the
subgroup
\[
G':=S_{4}^{p}\times S_{4}\leq O(3)\times S_{4},\quad S_{4}^{p}:=S_{4}%
\times{\mathbb{Z}}_{2},
\]
acts on the space $\mathcal{V}_{1}\otimes\mathcal{W}_{4}$, where $S_{4}\leq
O(3)$ acts naturally on $\mathcal{V}_{1}$ as $\mathcal{W}_{3}$, i.e.
$\mathcal{V}_{1}$ as a representation of $S_{4}\times{\mathbb{Z}}_{2}$ is
$\mathcal{W}_{3}^{-}$ (with the antipodal ${\mathbb{Z}}_{2}$-action).
Consequently, as a $G'$-representation, $\mathcal{V}_{1}\otimes
\mathcal{W}_{4}$ is equivalent to the irreducible $S_{4}^{p}\times S_{4}%
$-representation $\mathcal{W}_{3}^{-}\otimes\mathcal{W}_{4}$.
We use GAP to compute the basic degree. The code is uploaded to github:
https://github.com/violal1016/Bose-Einstein-Condensate. By using the GAP
system one can compute the corresponding basic $G'$-equivariant degree
\begin{align}
\nabla_{G'}\text{-deg}_{\mathcal{W}_{3}^{-}\otimes\mathcal{W}_{4}}  &
=(G')-({D_{4}^{p}}^{^{{\mathbb{Z}}_{2}^{-}}}\times{_{D_{4}}}D_{4}%
)_{1}-({D_{4}^{p}}^{^{{\mathbb{Z}}_{2}^{-}}}\times{_{D_{4}}}D_{4})_{2}%
-({D_{4}^{p}}^{^{D_{4}^{z}}}\times{^{D_{2}}}D_{4})\label{eq:g1}\\
&  -({D_{4}^{p}}^{^{D_{4}^{z}}}\times{^{D_{1}}}D_{2})-({D_{3}^{p}}%
^{^{D_{3}^{z}}}\times{^{D_{1}}}D_{2})-({D_{3}^{p}}^{^{D_{3}^{z}}}%
\times{^{D_{2}}}D_{4})\nonumber\\
&  -({D_{2}^{p}}^{^{D_{2}^{d}}}\times{^{D_{2}}}D_{4})-({D_{2}^{p}}^{{D_{2}%
^{d}}}\times{^{D_{1}}}D_{2})-(D_{4}^{z}\times D_{3})-(S_{4}^{-}\times_{S_{4}%
}S_{4}) \nonumber\\
&  -({D_{2}^{d}}\times D_{3})-({D_{3}^{z}}\times D_{3})-({D_{3}}\times
{_{D_{3}}}D_{3})+\bm\alpha,\nonumber
\end{align}
where $\bm\alpha$ denotes the element in the Euler ring $U(G')$ with all
the coefficients corresponding to sub-maximal orbit types. To give an idea
about the complexity of this degree, the lattice of the $G'$-orbit types in
$\mathcal{W}_{3}^{-}\otimes\mathcal{W}_{4}$ has couple of hundred of
subgroups, which are computed using GAP. According to the amalgamated notation
described in \cite{newBook}, the subgroups $({D_{4}^{p}}^{^{{\mathbb{Z}}_{2}%
^{-}}}\times_{D_{4}}D_{4})_{1}$ and $({D_{4}^{p}}^{^{{\mathbb{Z}}_{2}^{-}}%
}\times_{D_{4}}D_{4})_{2}$ in \eqref{eq:g1} indicates that there are two
epimorphisms, ${\varphi}_{1}:D_{4}^{p}\rightarrow D_{4}$ and $\varphi
_{2}:D_{4}^{p}\rightarrow D_{4}$ which are not conjugate in $G'$ but
conjugate in $G$, and $\text{\textrm{Ker\thinspace}}\varphi_{1}%
=\text{\textrm{Ker\thinspace}}\varphi_{2}={\mathbb{Z}}_{2}^{-}~.$

Let us recall (see the Appendix \ref{app:gradient}) that for a subgroup
$G'$ of $G$, i.e. the natural embedding $\psi:G'\rightarrow G$ induces the ring homomorphism $\Psi:U(G)\rightarrow U(G')$. This allows to derive (see Example \ref{ex:basic_degree} for details) that
\begin{align}
\nabla_{G}\text{-deg}_{\mathcal{V}_{1}\otimes\mathcal{W}_{4}}  &
=(G)-({D_{4}^{p}}^{^{{\mathbb{Z}}_{2}^{-}}}\times{_{D_{4}}}D_{4})-(O(2)^-\times D_3))\label{basicdegree2}
-( {O(2)^p}^{D_2^d}\times ^{D_1} D_2)\\
& -({O(2)^p}^{O(2)^-}\times ^{D_2}D_4)%
-(S_{4}^{-}\times_{S_{4}%
}S_{4})-({D_{3}}\times
{_{D_{3}}}D_{3})+\bm\beta,\nonumber
\end{align}
where $\bm\beta$ denotes the element in the Euler ring $U(G)$ with all the
coefficients corresponding to sub-maximal orbit types.\vs

We have the following six maximal orbit types in $\mathcal V_1\otimes \mathcal W_4\setminus \{0\}$:
\begin{align*}
\mathfrak m=\Big\{&({D_{4}^{p}}^{^{{\mathbb{Z}}_{2}^{-}}}\times{_{D_{4}}}D_{4}),(O(2)^-\times D_3),( {O(2)^p}^{D_2^d}\times ^{D_1} D_2),\\ &({O(2)^p}^{O(2)^-}\times ^{D_2}D_4),
(S_{4}^{-}\times_{S_{4}}S_{4}),({D_{3}}\times
{_{D_{3}}}D_{3})\}.
\end{align*}
The six global branches have the symmetries at least $H\in\mathfrak m$ respectively.
Observe that $(O(2)^-\times D_3), ( {O(2)^p}^{D_2^d}\times ^{D_1} D_2), ({O(2)^p}^{O(2)^-}\times ^{D_2}D_4),(S_{4}^{-}\times_{S_{4}
}S_{4})$ are maximal in $\Phi(G) \setminus \{(G)\}.$ Due to the assumption that the critical value is isotypic simple,
 the solutions in the corresponding local branches have symmetries exactly $H$. It is important to remark that the branches does not need to keep the same symmetries globally, there can be
 symmetry-breaking bifurcations along the global continuum to higher groups of symmetries.  

 If $(K)\ge ({D_{4}^{p}}^{^{{\mathbb{Z}}_{2}^{-}}}\times{_{D_{4}}}D_{4})$, then
\[
(K)=({D_{4k}^{p}}^{^{{\mathbb{Z}}_{2k}^{-}}}\times{_{D_{4}}}D_{4})
\]
Similarly, for $(D_3\times _{D_3}D_3),$ $ (K)=(D_{3k}^{\mathbb Z_k}\times _{D_3}D_3).$ Thus, if $(H_1),(H_2)\in \mathfrak m$ are different orbit types, then
for all $ (K_1),(K_2) $ being orbit types in $\mathcal X\setminus \{0\}$ such that $(K_1)\ge (H_1)$ and $ (K_2)\ge (H_2), $ one has $(K_1)\neq (K_2).$ That implies that the branches corresponding to these orbit types must be different.
\end{proof}\vs

From the above reasoning it follows that the solutions in bifurcating branches have their components related by different groups of rotations.
The more relevant being the group $(S_{4}^{-}\times_{S_{4}}S_{4})$ where the four solutions
are related by a tetrahedral symmetry of rotations. This is the reason why we chose an example with four components.
This group of rotations is not simply of the form $D_n$ such as in the other solutions found in $\mathfrak m$.

\appendix

\section{Spectrum of Laplace operator}

\label{app:laplace}

Consider the Laplace eigenvalue problem on the open unit ball $\Omega\subset\mathbb{R}^{3}$ given by
\begin{equation}%
\begin{cases}
-\triangle u(x)=\lambda^{2}u,\quad x\in\Omega\\
\left.  \frac{\partial u}{\partial n}\right\vert _{\partial\Omega}=0.
\end{cases}
\label{eq:Lap-spec1}%
\end{equation}
In the case $\lambda=0$ we know that the only solutions are the constant
functions. For $\lambda
^{2}>0,$ the spectrum of $-\triangle$ can be studied by employing separation of variables in spherical coordinates $u(r,\theta,\vp)$ in $\mathbb{R}^{3}$, with $r\ge 0$,  $\theta\in [0,2\pi]$ and $\vp\in [0,\pi]$. The derivation of the spectrum largely follows Appendix B of \cite{LCW}, with a key difference: we restrict to the case 
$\mathbb R^d$ with $d=3,$ and the system satisfies Neumann rather than Dirichlet boundary conditions.\vs
\noi As a result, by defining $\mathcal K:=\{(0,0)\}\cup \{(k,m)\in \mathbb N\times \mathbb N^+\}$, we obtain the spectrum of $\mathscr -\triangle,$ which is given by
\begin{equation}
\sigma(\mathscr -\triangle)=\left\{  s^2_{km}:(k,m)\in\mathcal {K}\right\}, \label{eq:spL}%
\end{equation}
where $s_{00}=0$ and in other cases, $s_{km}$ denotes the $m$-th positive zero of \[
\lambda J_{k+1/2}^{\prime}(\lambda)-\frac{1}{2}J_{k+1/2}(\lambda).
\]
Notice, here $J_{k+1/2}$ stands for the $k+1/2$-th Bessel function of the first kind. Moreover,
we have the following subspace $\mathcal{E}_{km}$ with eigenvalue $s^2_{km}$:
$\mathcal{E}_{00}=\left\langle 1\right\rangle $ and for $(k,m)\in\mathcal K\backslash \{(0,0)\},$ one has
\begin{equation}
\mathcal{E}_{km}=\left\langle r^{-\frac{1}{2}}J_{k+1/2}(s_{km}r)P_{k}^{n}
(\cos\varphi)\Big(a\cos(n\theta)+b\sin(n\theta)\Big):a,\,b\in\mathbb{R},0\leq
n\leq k\right\rangle, \qquad \label{eigenspace}%
\end{equation}
where  $P_k^n$ is the Legendre
function, given by the well-known formula
\[
P_k^n(s)=\frac{(1-s^2)^{\frac n2}}{2^kk!}\frac{d^{k+n}}{ds^{k+n}}(s^2-1)^k.
\] 

Notice that the eigenspace corresponding to the eigenvalue $s^2_{km}$ is the
subspace generated by $\left\{  \mathcal{E}_{jn}:s_{jn}=s_{km}\right\}  $, and
in the case that $s^2_{km}$ is isotopic simple, the corresponding eigenspace is
$\mathcal{E}_{km}$.
\section{Euler and Burnside Rings}\label{app:Eulerring}
\subsection{Euler and Burnside Rings\label{app:Euler}}

In this section we assume that $G$ stands for a compact Lie group and we
denote by $\Phi(G)$ the set of all conjugacy classes $(H)$ of closed subgroups
$H$ of $G$. For any $(H)\in\Phi(G)$ we denote by $N(H)$ the normalizer of $H$
and by $W(H):=N(H)/H$ the Weyl's group of $H$. \vskip.3cm

Notice that $\Phi(G)$ admits a natural order relation given by
\begin{equation}
\label{eq:order}(K)\le(H) \;\; \Leftrightarrow\;\; \exists_{g\in G}\; \;
gKg^{-1}\le H, \;\; \text{ for }\;\; (K),\,(H)\in\Phi(G).
\end{equation}
Moreover, we define for $n=0,1,2,\dots$ the following subsets $\Phi_{n}(G)$ of
$\Phi(G)$
\[
\Phi_{n}(G):=\{ (H)\in\Phi(G): \dim W(H)=n\}.
\]
\vskip.3cm Let us point out that for $(H)$, $(K)\in\Phi(G)$, if $(K)\le(H)$
then $\dim W(K)\ge\dim W(H)$ (see \cite{AED}). \vskip.3cm We denote by
$U(G)={\mathbb{Z}}[\Phi(G)]$ the free ${\mathbb{Z}}$-module generated by
symbols $\Phi(G)$, i.e. an element $a\in U(G)$ is represented as
\begin{equation}
\label{eq:sum-a}a=\sum_{(H)\in\Phi(G)} n_{H} \, (H), \quad n_{H}\in
{\mathbb{Z}},
\end{equation}
where the integers $n_{H}=0$ except for a finite number of elements
$(H)\in\Phi(G)$. For such element $a\in U(G)$ and $(H)\in\Phi(G)$, we will
also use the notation
\begin{equation}
\label{eq:coeff-H}\text{coeff}^{H}(a)=n_{H},
\end{equation}
i.e. $n_{H}$ is the coefficient in \eqref{eq:sum-a} standing by $(H)$. We also
put
\begin{equation}
\label{eq:Phi(a)}\Phi(a):=\{(H)\in\Phi(G): \text{coeff}^{H}(a)\not =0\}.
\end{equation}
Clearly, the set $\Phi(a)$ is finite and we denote by $\text{max}(a)$ the set
of all maximal (with respect to the order \eqref{eq:order}) elements in
$\Phi(a)$, i.e.
\begin{equation}
\label{eq:max-in-a}\max(a):=\{(H)\in\Phi(a): \;(H) \text{ is maximal in }%
\Phi(a)\}.
\end{equation}
We will sometimes write ${\max}^G(a)$ in order to indicate that the order relation is taken from $\Phi(G)$.
\vskip.3cm

\begin{definition}
\textrm{\label{def:EulerRing} (cf. \cite{tD}) The
\textit{multiplication} on $U(G)$ is defined as follows: Let $(H)$,
$(K)\in\Phi(G)$ be generators. Then,
\begin{equation}
(H)\ast(K):=\sum_{(L)\in\Phi(G)}n_{L}(L),\label{eq:Euler-mult}%
\end{equation}
where
\begin{equation}
n_{L}:=\chi_{c}((G/H\times G/K)_{L}/N(L)).\label{eq:Euler-coeff}%
\end{equation}
Note,  $(G/H\times G/K)_{L}$ denotes the set of elements in $G/H\times G/K$ that are
fixed exactly by $L$. $\chi_c(\cdot)$ denotes the Euler Characteristic (For its precise definition, see Section 3 of \cite{DK} and \cite{Spa}).} Moreover, the multiplication is extended linearly to the entire Euler ring 
$U(G)$. Then the free ${\mathbb{Z}}$-module $U(G)$ associated with
multiplication (\ref{eq:Euler-mult}) is called the \textit{Euler ring} of $G$.
\end{definition}

\vskip.3cm It is easy to notice that $(G)$ is the unit element in $U(G)$, i.e.
$(G)*a=a$ for all $a\in U(G)$. \vskip.3cm

\begin{lemma}
\label{lem:inv-1} Assume that $a\in U(G)$ is an invertible element and
$(H)\in\Phi(G)$. Then
\[
\text{\text{\rm coeff}}^{H}((H)*a)\not =0.
\]

\end{lemma}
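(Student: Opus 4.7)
My plan is to prove Lemma \ref{lem:inv-1} by identifying the $\mathbb{Z}$-linear functional $a \mapsto \text{coeff}^H((H) * a)$ on $U(G)$ with the geometric mark homomorphism $\varphi_H: U(G) \to \mathbb{Z}$ defined on generators by $\varphi_H((K)) := \chi_c((G/K)^H)$. Once this identification is in place, invertibility of $a$ forces $\varphi_H(a) \in \{\pm 1\}$, which yields the claim.

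First, I would compute $\text{coeff}^H((H) * (K))$ directly from Definition \ref{def:EulerRing}, which gives $\chi_c((G/H \times G/K)_H / N(H))$. A stabilizer analysis shows that a pair $(aH, bK)$ has stabilizer exactly $H$ precisely when $a \in N(H)$ and $b^{-1}Hb \subseteq K$, i.e.\ when $bK \in (G/K)^H$; so the stratum equals $(N(H)/H) \times (G/K)^H$ with $N(H)$ acting diagonally. Taking canonical orbit representatives $(H, bK)$ and observing that the residual $H$-action on $(G/K)^H$ is trivial by the definition of fixed points, the orbit quotient is homeomorphic to $(G/K)^H$. Hence $\text{coeff}^H((H)*(K)) = \chi_c((G/K)^H) = \varphi_H((K))$, and by $\mathbb{Z}$-linearity $\text{coeff}^H((H) * a) = \varphi_H(a)$.

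Second, I would invoke that $\varphi_H$ is a ring homomorphism — a standard fact for the Euler ring of a compact Lie group (cf.\ \cite{tD}) — following from the identity $(X \times Y)^H = X^H \times Y^H$ combined with the multiplicativity of $\chi_c$ for products, transported through the orbit-type stratification that defines the Euler-ring multiplication. Since $\varphi_H((G)) = \chi_c(\{*\}) = 1$, letting $b \in U(G)$ be an inverse of $a$ gives $\varphi_H(a) \cdot \varphi_H(b) = \varphi_H(ab) = 1$, so $\varphi_H(a) = \pm 1$ and in particular $\text{coeff}^H((H) * a) \ne 0$.

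The main obstacle lies in rigorously justifying that the set-theoretic bijection $(G/H \times G/K)_H / N(H) \cong (G/K)^H$ is a genuine homeomorphism in the $G$-CW category, so that the compactly-supported Euler characteristics agree, and relatedly that the ring-homomorphism property of $\varphi_H$ on $U(G)$ descends correctly through the orbit-type decomposition. In the compact Lie group setting both are standard in the equivariant-topology literature; I would cite \cite{tD} for the ring-homomorphism property and verify the identification by exhibiting the obvious projection $N(H)/H \times (G/K)^H \to (G/K)^H$ as a trivial $W(H)$-bundle on which $N(H)$ acts compatibly.
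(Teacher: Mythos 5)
Your proof is correct, but it proceeds along a genuinely different route from the one in the paper. The paper's argument is purely order-theoretic: by \eqref{eq:Euler-mult}--\eqref{eq:Euler-coeff}, any orbit type $(K)$ occurring in $(H)*x$ satisfies $(K)\le(H)$, since a point of $G/H\times G/Y$ with stabilizer exactly $K$ has $K$ subconjugate to $H$; if $\mathrm{coeff}^{H}((H)*a)$ were zero, every orbit type of $(H)*a$ would be strictly below $(H)$, and applying the same observation to $((H)*a)*a^{-1}$ would force every orbit type of $(H)=(H)*(G)$ to lie strictly below $(H)$ --- a contradiction. Your argument instead computes the coefficient exactly: the stabilizer analysis identifying $(G/H\times G/K)_{H}/N(H)$ with $(G/K)^{H}$ is sound (the key point, that $H\le aHa^{-1}$ forces $a\in N(H)$ for closed subgroups of a compact Lie group, holds by comparing dimensions and numbers of components), and the resulting identity $\mathrm{coeff}^{H}((H)*a)=\varphi_{H}(a)$ reduces the lemma to the statement that a unital ring homomorphism $U(G)\to\mathbb{Z}$ sends units to units. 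Your route buys a sharper conclusion --- the coefficient equals $\pm1$, not merely a non-zero integer --- but at the price of importing the non-trivial fact that $\varphi_{H}$ is multiplicative on $U(G)$ for a compact Lie group (equivalently, the decomposition $\chi_{c}(X^{H})=\sum_{(M)}\chi_{c}(X_{M}/N(M))\,\chi_{c}((G/M)^{H})$ from \cite{tD}), whereas the paper's proof is self-contained, using only the subconjugacy constraint built into the definition of the Euler-ring multiplication.
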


\begin{proof}
Suppose that
\[
a=\sum_{(L)\in\Phi(a)} n_{L} \, (L).
\]
Then
\[
(H)*a=\sum_{(K)\in\Phi((H)*a)} m_{K}\, (K), \text{ and formula
(\ref{eq:Euler-mult}) implies that} \;\; (H)\ge(K).
\]
Assume for contradiction that $(H)>(K)$ for all $(K)\in\Phi((H)*a)$. Then, by
exactly the same argument we have
\[
(H)*a*a^{-1}=\sum_{(L)\in\Phi((H)*a*a^{-1})} n_{L} \, (L), \quad\text{ where }
(H)>(L),
\]
which is a contradiction with the fact that
\[
(H)*a*a^{-1}=(H)*(G)=(H).
\]

\end{proof}

\vskip.3cm Lemma \ref{lem:inv-1} implies the following: \vskip.3cm

\begin{corollary}
\label{cor:inv} Let $a$, $b\in U(G)$ be such that $a$ is an invertible element
in $U(G)$ and $(H)\in\text{\textrm{max}}(b)$. Then $\text{\textrm{coeff}}%
^{H}(a*b)\not =0$.
\end{corollary}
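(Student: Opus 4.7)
The plan is to reduce Corollary \ref{cor:inv} directly to Lemma \ref{lem:inv-1} by expanding $b$ over the generators and using the fact that multiplication in $U(G)$ respects the partial order \eqref{eq:order}. Specifically, I would first observe from formula \eqref{eq:Euler-coeff} that, for any $(H),(K)\in\Phi(G)$, a nonzero coefficient $n_L$ in the product $(H)*(K)$ forces the existence of an element in $G/H\times G/K$ with isotropy exactly $L$, hence $(L)\le(H)$ and $(L)\le(K)$. In particular, every orbit type appearing in $(L)*a$ is $\le (L)$.

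Next, I would write $b=\sum_{(L)\in\Phi(b)}n_L\,(L)$ with $n_L\neq0$, and expand
\begin{equation*}
a*b=\sum_{(L)\in\Phi(b)}n_L\,\bigl(a*(L)\bigr).
\end{equation*}
Fix $(H)\in\max(b)$. The coefficient of $(H)$ in $a*b$ is
\begin{equation*}
\mathrm{coeff}^H(a*b)=\sum_{(L)\in\Phi(b)}n_L\cdot\mathrm{coeff}^H\bigl(a*(L)\bigr),
\end{equation*}
and by the preceding observation, $\mathrm{coeff}^H(a*(L))$ can be nonzero only when $(H)\le(L)$. Since $(H)\in\max(b)$ and $(L)\in\Phi(b)$, the inequality $(H)\le(L)$ combined with maximality forces $(L)=(H)$. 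Thus exactly one summand survives:
\begin{equation*}
\mathrm{coeff}^H(a*b)=n_H\cdot\mathrm{coeff}^H\bigl((H)*a\bigr),
\end{equation*}
where I use commutativity of the Euler ring multiplication, evident from the symmetry of \eqref{eq:Euler-coeff} in $H$ and $K$.

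Finally, $n_H\neq 0$ because $(H)\in\Phi(b)$ (maximal elements of $\Phi(b)$ lie in $\Phi(b)$), and $\mathrm{coeff}^H((H)*a)\neq 0$ by Lemma \ref{lem:inv-1}, since $a$ is invertible. Therefore $\mathrm{coeff}^H(a*b)\neq 0$, which completes the proof. I do not anticipate a serious obstacle here; the only subtle point is confirming that the support of $(L)*a$ lies below $(L)$ in the order \eqref{eq:order}, which is precisely the observation used inside the proof of Lemma \ref{lem:inv-1} and follows immediately from the isotropy characterization of the coefficients in \eqref{eq:Euler-coeff}.
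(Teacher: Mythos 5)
Your proposal is correct and follows exactly the route the paper intends: the paper derives Corollary \ref{cor:inv} from Lemma \ref{lem:inv-1} without writing out the details, and your argument (only the term $n_H(H)$ of $b$ can contribute to $\mathrm{coeff}^H(a*b)$ because the support of $a*(L)$ lies below $(L)$ in the order \eqref{eq:order}, and maximality of $(H)$ in $\Phi(b)$ then isolates that term) is precisely the omitted reduction, after which the lemma applies. No gaps; the supporting observation about isotropy groups in $G/H\times G/K$ is exactly the one already used in the paper's proof of the lemma.
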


\vskip.3cm Now, let $\Phi_{0}(G)=\{(K)\in\Phi(G):$ \textrm{dim\thinspace
}$W(K)=0\},$ and $A(G)={\mathbb{Z}}[\Phi_{0}(G)]$ denote a free
${\mathbb{Z}}$-module with basis $\Phi_{0}(G).$ The multiplication on
$A(G)$ is defined as the restriction of the multiplication on $U(G)$ to $A(G)$, i.e. for $(K)$,
$(H)\in\Phi_{0}(G)$ let
\begin{equation}
(K)\cdot(H)=\sum_{(L)}n_{L}(L),\qquad(K),(H),(L)\in\Phi_{0}(G),\text{ where}
\label{eq:multBurnside}%
\end{equation}
\begin{equation}
n_{L}=\chi((G/K\times G/H)_{L}/N(L))=|(G/K\times G/H)_{L}/N(L)|.
\label{eq:CoeffBurnside}%
\end{equation}
Notice $\chi(\cdot)$ denotes the usual Euler characteristic (see \cite{DK}). Now, equipped with the multiplication defined in \eqref{eq:CoeffBurnside}, $A(G)$ becomes a ring, known as the
\textit{Burnside ring} of $G$. As shown in \cite{AED}, the coefficients in \eqref{eq:CoeffBurnside} can be computed using the recursive formula:
\begin{equation}
n_{L}=\frac{n(L,H)|W(H)|n(L,K)|W(K)|-\sum_{(L^{'})>(L)}n(L,L^{'})n_{L^{'}}|W(L^{'})|}{|W(L)|}, \label{eq:rec-coef}%
\end{equation}
where $(H),$ $(K),$ $(L)$ and $(L^{'})$ are taken from $\Phi_{0}
(G)$ and \[
n(L,H)=\Big|\frac{N(L,H)}{N(H)}\Big|,
\]
where
\begin{align*}
&
N_{G}(L,H)=\Big\{
g\in G:gLg^{-1} \subset H\Big\} , \;N_{G}(L,H)/H=\Big\{
Hg: g\in N_{G}(L,H)\Big\}.
\end{align*}
 Notice that $A(G)$ is a ${\mathbb{Z}}$-submodule
of $U(G)$, but it is \textbf{not} a subring of $U(G)$ in general.

\vskip.3cm Now for $(H)\in
\Phi(G),$ we define $\pi_{0}:U(G)\rightarrow A(G),$
\begin{equation}
\pi_{0}((H))=
\begin{cases}
(H) & \text{ if }\;(H)\in\Phi_{0}(G),\\
0 & \text{ otherwise.}%
\end{cases}
\label{eq:pi_0-homomorphism}%
\end{equation}
Then $\pi_{0}$ here is a ring
homomorphism {(cf. \cite{BKR})}, which also implies that
\[
\pi_{0}((K)\ast(H))=\pi_{0}((K))\cdot\pi_{0}((H)),\qquad(K),(H)\in\Phi(G).
\]
A distinction between the generators $(H)$ in $U(G)$ and those of $A(G)$ is demonstrated by the following result(cf. \cite{tD}, Proposition 1.14, page 231)
\begin{proposition}
\label{pro:nilp-elements} Let $(H)\in\Phi_{n}(G)$.

\begin{itemize}
\item[(i)] If $n>0$, then $(H)^{k}=0$ in $U(G)$ for some $k\in\mathbb{N}$,
i.e. $(H)$ is a nilpotent element in $U(G)$;

\item[(ii)] If $n=0$, then $(H)^{k}\not =0$ for all $k\in{\mathbb{N}}$.
\end{itemize}
\end{proposition}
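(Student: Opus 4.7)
The proposition has two parts requiring quite different arguments, so I plan to handle them separately.

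For part (ii), my plan is to project to the Burnside ring $A(G)$ using the ring homomorphism $\pi_0$ from \eqref{eq:pi_0-homomorphism}, and then compute the coefficient of $(H)$ in $(H)^k$ using the recursive formula \eqref{eq:rec-coef}. Since $(H)\in\Phi_0(G)$, we have $\pi_0((H))=(H)$ in $A(G)$, so $\pi_0((H)^k)=(H)^k$ in $A(G)$, and it suffices to show the latter is nonzero. Setting $L=K=H$ in \eqref{eq:rec-coef}, noting that $n(H,H)=|N(H)/N(H)|=1$ and that the support of $(H)\cdot(H)$ contains no $(\tilde L)>(H)$, the correction sum vanishes and one obtains $\mathrm{coeff}^H((H)\cdot(H))=|W(H)|$. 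Because the only $(L)\leq(H)$ with $(L)\geq(H)$ is $(H)$ itself, a straightforward induction on $k$ gives $\mathrm{coeff}^H((H)^k)=|W(H)|^{k-1}$, a positive integer. Hence $(H)^k\neq 0$ in $A(G)$, and consequently also in $U(G)$.

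For part (i), I would first carry out the key vanishing computation at the diagonal orbit type. Using that $(G/H)^H=N(H)/H=W(H)$, identify $(G/H\times G/H)_H$ with $W(H)\times W(H)$ (the isotropy is automatically exactly $H$ on this fixed-point set), and its quotient by the residual diagonal $N(H)$-action with $W(H)$ itself. Since $\dim W(H)=n>0$, the Lie algebra of $W(H)$ supplies a nowhere-vanishing left-invariant vector field, so $\chi(W(H))=0$; hence $\mathrm{coeff}^H((H)\ast(H))=0$, and $(H)^2$ lies in the submodule spanned by orbit types $(L)$ strictly below $(H)$, all of which satisfy $\dim W(L)\geq n$ by the order/dimension relation recalled before \eqref{eq:sum-a}.

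Promoting this initial vanishing into genuine nilpotency is the main obstacle, and where I would follow the approach of \cite{tD}. The strategy is to analyse the strata $(G/H\times G/K)_L/N(L)$ organised by $\dim W(L)$: whenever $\dim W(L)>0$, the residual $W(L)$-action on the stratum is free, producing a fibration whose fibre $W(L)$ has $\chi(W(L))=0$, and one uses the multiplicativity $\chi_c(X)=\chi(F)\chi_c(Y)$ for fibre bundles with compact fibre to cancel contributions. Combining these cancellations with the bound $\dim W(L)\leq\dim G$ forces $(H)^k=0$ once $k$ is sufficiently large. The delicate point, which I expect to be the hardest step, is that multiplicativity produces vanishing of $\chi_c$ of the total space rather than of the orbit space; bridging this gap requires a stratification by exact orbit type together with an inductive argument on the Weyl group dimension, which is precisely the technical content of the cited result in \cite{tD}.
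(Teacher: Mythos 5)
The paper does not actually prove this proposition: it is stated as a known result with a bare citation to tom Dieck (\cite{tD}, Proposition 1.14), so there is no in-paper argument to compare yours against. Judged on its own terms, your part (ii) is a complete and correct proof, and arguably the cleanest available: pushing forward along the ring homomorphism $\pi_0$ to $A(G)$, the recursive formula \eqref{eq:rec-coef} with $L=K=H$ (no terms $(\widetilde L)>(H)$ can occur in the support of $(H)\cdot(H)$, and $n(H,H)=1$) gives $\mathrm{coeff}^H((H)\cdot(H))=|W(H)|$, and your induction $\mathrm{coeff}^H((H)^k)=|W(H)|^{k-1}$ is valid because every orbit type in the support of $(H)^{k-1}$ is $\leq(H)$ and only $(H)$ itself can contribute to the coefficient of $(H)$ in the next product.

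For part (i), your computation that $(G/H\times G/H)_H/N(H)\cong W(H)$ and hence $\mathrm{coeff}^H\bigl((H)\ast(H)\bigr)=\chi_c(W(H))=0$ when $\dim W(H)>0$ is correct, as is the observation that $(H)^2$ is then supported on orbit types strictly below $(H)$ with Weyl groups of dimension at least $n$. However, as you yourself flag, this only shows that the coefficient of $(H)$ dies after one multiplication; it does not by itself yield $(H)^k=0$, since the strictly smaller orbit types produced could in principle keep regenerating one another indefinitely (there is no descending chain condition on $\Phi(G)$ in general). Your sketch of how to finish — stratifying $(G/H\times G/K)_L$ by orbit type and using the free residual $W(L)$-action together with multiplicativity of $\chi_c$ for fibre bundles — is the right idea and is indeed how the argument in \cite{tD} goes, but as written it is a plan rather than a proof: the passage from vanishing of $\chi_c$ of total spaces to vanishing of $\chi_c$ of the orbit spaces entering \eqref{eq:Euler-coeff}, and the uniform bound on the power $k$ needed, are exactly the points you leave open. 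Since the paper itself offers nothing beyond the citation, this is not a defect relative to the paper, but part (i) of your proposal should be regarded as incomplete as a standalone argument.
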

\begin{corollary}
    If $\alpha=n_1(L_1)+n_2(L_2)+\cdots +n_k(L_k),$ where $\dim W(L_j)\ge 0,$ then there exists $n\in \mathbb N$ s.t. $\alpha^n=0.$
\end{corollary}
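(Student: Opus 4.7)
The plan is to recognize this as a direct consequence of Proposition \ref{pro:nilp-elements}(i) combined with commutativity of the Euler ring $U(G)$, since the hypothesis must in fact be $\dim W(L_j)>0$ for every $j$ (otherwise $\alpha=(G)$ would be a counterexample, as $(G)$ is the multiplicative identity of $U(G)$).

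The first step is to verify that $U(G)$ is a commutative ring. In Definition \ref{def:EulerRing}, the structure constant $n_L=\chi_c((G/H\times G/K)_L/N(L))$ is symmetric in $(H),(K)$: indeed, the $G$-equivariant swap map $\sigma\colon G/H\times G/K\to G/K\times G/H$, $(gH,g'K)\mapsto(g'K,gH)$, is a homeomorphism preserving isotropy, hence carrying $(G/H\times G/K)_L$ bijectively onto $(G/K\times G/H)_L$ and descending to a homeomorphism of $N(L)$-orbit spaces with the same compactly supported Euler characteristic. Thus $(H)*(K)=(K)*(H)$ for every pair of classes, and by linear extension $U(G)$ is commutative.

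With commutativity in hand, Proposition \ref{pro:nilp-elements}(i) provides, for each $j$, an exponent $k_j\in\mathbb{N}$ with $(L_j)^{k_j}=0$, so $(n_j(L_j))^{k_j}=n_j^{k_j}(L_j)^{k_j}=0$ as well. I would then conclude by the standard binomial argument: for commuting nilpotent ring elements $a,b$ with $a^r=b^s=0$, every term of the expansion
\[
(a+b)^{r+s-1}=\sum_{i=0}^{r+s-1}\binom{r+s-1}{i}a^{i}b^{r+s-1-i}
\]
has either $i\ge r$ or $r+s-1-i\ge s$ and so vanishes. Iterating this over the $k$ pairwise commuting nilpotent summands of $\alpha$ yields an explicit exponent, for example $n=\sum_{j=1}^{k}k_j-(k-1)$, for which $\alpha^n=0$. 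I do not anticipate any serious obstacle: the corollary is essentially the fact that the nilradical of a commutative ring is closed under finite sums, applied to the nilpotent generators supplied by Proposition \ref{pro:nilp-elements}(i).
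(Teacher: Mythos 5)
Your proof is correct and takes essentially the same route as the paper's: the paper argues by induction on the number of summands, expanding $(\alpha+n_{k+1}(L_{k+1}))^m$ binomially and observing that every term vanishes once $m$ exceeds the sum of the two nilpotency exponents, which is exactly your iterated binomial argument for commuting nilpotents supplied by Proposition (i). You are also right that the hypothesis must be read as $\dim W(L_j)>0$ (the paper's proof implicitly assumes this, since it invokes part (i) of the nilpotency proposition), and your explicit verification of the commutativity of $U(G)$ is a detail the paper leaves tacit.
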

\begin{proof}
    By induction w.r.t. $k\in \mathbb N$, clearly for $k=1,$ it is exactly the statement of proposition\eqref{pro:nilp-elements}. Suppose that the statement is true for $k\geq 1,$ and will show that it is also true for $k+1.$ Indeed, we have
    \[
\alpha^1=n_1(L_1)+n_2(L_2)+\cdots n_k(L_k)+n_{k+1}(L_{k+1})
    =\alpha+n_{k+1}(L_{k+1}),
    \]
so \[
\alpha^m=\sum_{l=0}^{m}C_m^l \alpha^l n_k^{m-l}(L_{k+1})^{m-l}.
\]
Let $k$ be given by Proposition \eqref{pro:nilp-elements}, for $L:=l_{k+1},$ then for $m\ge n+k,$ one has
\[
\alpha^l (L)^{m-l}=0
\]
    \end{proof}
\vskip.3cm \subsection{Euler Ring Homomorphism:} Let $\psi:{G^{\prime}
}\rightarrow G$ be a homomorphism between two compact Lie groups. Then, it induces a left action of ${G^{\prime}} $ on $G,$ defined by 
\[
{g^{\prime}}x:=\psi({g^{\prime}})x,\]
and similarly, a right action by
$x{\ g^{\prime}}:=x\psi({g^{\prime}}).$ Particularly, for any subgroup $H\leq G$, the map $\psi$ induces a left $G^{\prime}$-action
on the homogeneous space $G/H,$ where the stabilizer of a point $gH\in G/H$ is given by
\begin{equation}
G_{gH}^{\prime}=\psi^{-1}(gHg^{-1}). \label{eq:grad-iso-rel}%
\end{equation}
Consequently, $\psi$ induces a ring homomorphism $\Psi:U(G)\rightarrow U({G^{\prime}}),$ given by
\begin{equation}
\Psi((H))=\sum_{({H^{\prime}})\in\Phi({G^{\prime}},G/H)}n_{H^{'}}({H^{\prime}}), \quad \text{where }
n_{H^{\prime}}=\chi_{c}
\Big((G/H)_{({H^{\prime}})}/{G^{\prime}}\Big).
\label{eq:RingHomomorphism}%
\end{equation}
\vs
\begin{proposition}
\label{prop:RingHomomorphism} \textrm{(cf. \cite{BKR,tD})} The map $\Psi$
defined by (\ref{eq:RingHomomorphism}) is a homomorphism of Euler rings.
\end{proposition}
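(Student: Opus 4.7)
The plan is to verify that $\Psi$ preserves addition (automatic by $\mathbb{Z}$-linear extension), preserves the unit, and preserves multiplication. The unit relation $\Psi((G)) = (G')$ follows at once: $G/G$ is a single point whose full stabilizer under the $G'$-action via $\psi$ is $G'$ itself by \eqref{eq:grad-iso-rel}, so the only summand is $\chi_c(\{*\}/G')\,(G') = (G')$.

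For multiplicativity on generators $(H), (K) \in \Phi(G)$, the strategy is to interpret both $\Psi((H)*(K))$ and $\Psi((H))*\Psi((K))$ as the ``orbit-type class'' in $U(G')$ of the product space $G/H \times G/K$ endowed with the diagonal $G'$-action via $\psi$. Using the definition of multiplication in $U(G)$ from \eqref{eq:Euler-mult}--\eqref{eq:Euler-coeff} together with the $\mathbb{Z}$-linear extension of $\Psi$, the coefficient of $(L') \in \Phi(G')$ in $\Psi((H)*(K))$ equals
\[
\sum_{(L) \in \Phi(G)} \chi_c\bigl((G/H \times G/K)_L/N(L)\bigr)\,\chi_c\bigl((G/L)_{(L')}/G'\bigr).
\]
On the other hand, the analogous formula \eqref{eq:Euler-mult}--\eqref{eq:Euler-coeff} for multiplication in $U(G')$ shows that the coefficient of $(L')$ in $\Psi((H))*\Psi((K))$ equals $\chi_c\bigl((G/H \times G/K)_{(L')}/G'\bigr)$, once one identifies the product $G'$-space built from two homogeneous $G'$-spaces of orbit types $(H')$ and $(K')$ with its representation as a sub-product of $G/H \times G/K$ (the cross-terms vanish by a pairing argument at the level of $G'$-strata). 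Hence the ring homomorphism property reduces to establishing the single identity
\begin{equation}
\chi_c\bigl((G/H \times G/K)_{(L')}/G'\bigr) = \sum_{(L) \in \Phi(G)} \chi_c\bigl((G/H \times G/K)_L/N(L)\bigr)\,\chi_c\bigl((G/L)_{(L')}/G'\bigr). \label{eq:master}
\end{equation}

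Identity \eqref{eq:master} is established by a double stratification argument. The $G$-orbit-type strata partition $G/H \times G/K$ into locally closed pieces of the form $(G/H \times G/K)_{(L)} = G \cdot (G/H \times G/K)_L$, and the canonical projection onto the orbit space realizes each such piece as a locally trivial fiber bundle over $(G/H \times G/K)_L/N(L)$ with fiber $G$-equivariantly homeomorphic to $G/L$. Intersecting with the $G'$-orbit-type stratum $(G/H \times G/K)_{(L')}$ and passing to the $G'$-quotient fiberwise gives a locally trivial fiber bundle with base $(G/H \times G/K)_L/N(L)$ and fiber $(G/L)_{(L')}/G'$. The multiplicativity of the compactly supported Euler characteristic over such fiber bundles (recalled at the start of Appendix \ref{app:Eulerring}), together with the additivity of $\chi_c$ over the disjoint $G$-stratification on the left side, yields \eqref{eq:master}.

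The principal obstacle will be the bookkeeping around this double stratification: one must verify that the $G$- and $G'$-orbit-type filtrations are compatible (so that the restriction of the $G'$-stratum to each $G$-stratum is a sub-bundle whose fiber is exactly the corresponding $G'$-orbit-type subset of $G/L$), invoke the slice theorem to secure local triviality as $N(L)$-bundles, and handle the non-compactness of the strata through $\chi_c$ rather than ordinary $\chi$. Once this geometric picture is in place, \eqref{eq:master} follows and $\Psi$ is a homomorphism of Euler rings.
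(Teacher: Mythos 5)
The paper does not actually prove this proposition: it is quoted from the literature with the citation ``(cf.\ \cite{BKR,tD})'', so there is no in-paper argument to compare yours against. What you have written is essentially the standard proof from tom Dieck's book: interpret both sides as the equivariant Euler characteristic $\chi_{G'}(G/H\times G/K)$ of the product viewed as a $G'$-space via $\psi$, and reduce everything to the identity $\Psi(\chi_G(X))=\chi_{G'}(X)$ proved by stratifying $X$ into $G$-orbit-type pieces, fibering each piece over its orbit space with fiber $G/L$, and applying multiplicativity of $\chi_c$. Your treatment of the unit and of additivity is correct, and your identity \eqref{eq:master} is exactly the right reduction. So the route is the expected one and the skeleton is sound.

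Two places are thinner than you acknowledge. First, your claim that the coefficient of $(L')$ in $\Psi((H))\ast\Psi((K))$ equals $\chi_c\bigl((G/H\times G/K)_{(L')}/G'\bigr)$ is precisely the statement $\chi_{G'}(X\times Y)=\chi_{G'}(X)\ast\chi_{G'}(Y)$ for the non-homogeneous $G'$-spaces $X=G/H$, $Y=G/K$; this is a second instance of the same stratification argument as \eqref{eq:master}, not a formal consequence of \eqref{eq:Euler-mult}--\eqref{eq:Euler-coeff}, and the parenthetical about ``cross-terms vanishing'' does not describe what is actually needed (there are no cross-terms to cancel; one must show the product of the two $G'$-stratifications computes $\chi_{G'}$ of the product under the diagonal action). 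You should either prove it by the same double-stratification device or cite it explicitly as the multiplicativity of the equivariant Euler characteristic from \cite{tD}. Second, the multiplicativity of $\chi_c$ recalled at the start of Appendix \ref{app:Eulerring} requires a compact manifold fiber and path-connected base, whereas your fibers $(G/L)_{(L')}/G'$ are orbit spaces of locally closed subsets of $G/L$ and the bases $X_L/N(L)$ need not be connected; one must refine the stratification further (into pieces where the fiber is a genuine homogeneous space $N_{G'}(L')/L'$-bundle, or work component by component) or invoke the more general additivity/multiplicativity of $\chi_c$ for such stratified fibrations. These are exactly the technicalities handled in \cite{tD}, so the proof is completable along your lines, but as written these two steps are gaps rather than bookkeeping.
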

\vs

Notice that if $G^{'}\le G$ and $H\le G$,  then $(H')\in \Phi(G';G/H)$ if and only if
\[
H'=g H g^{-1} \cap G' \quad \text { for some } \; g\in G.
\]
Indeed,
\[
gH \in (G/H)_{(H^{\prime})} \Leftrightarrow (G^{\prime}_{gH})=(H^{'}),
\]
and
\begin{align*}
G^{'}_{gH}:
& =\left\{
g^{\prime}\in G^{\prime}: g^{\prime}gH=gH
\right\} =\left\{ g^{\prime} \in G^{\prime}: g^{-1}g^{\prime} g \in H\right\}\\
&= \left \{ g^{\prime} \in G^{\prime}: g^{\prime}\in gHg^{-1}\right\}=gHg^{-1}\cap G^{\prime}
\end{align*}
Therefore, $(G/H)_{H^{\prime}}:=\Big\{gH:g\in G \text{ and } gHg^{-1}\cap G^{\prime}=H^{\prime}\Big\}.$ \vs

\black
\section{Brouwer $G$-Equivariant Degree and $G$-Equivariant Gradient Degree}

\label{app:gradient}
In what follows, we assume that $G$ is a compact Lie group. 
\subsection{Brouwer $G$-Equivariant Degree and Its Computation}
We briefly review some essential definitions and properties of Brouwer $G$-equivariant degree in this section. For more details, one is referred to Chapters 3 of \cite{newBook}.\vs Let $V$ be an orthogonal representation of a compact Lie group $G$, and let $\Omega\subset V$ be an
open, bounded, $G$-invariant subset. A $G$-equivariant map $f:V\to V$ is said to be \textit{$\Omega$-admissible} if $\forall x\in
\partial\Omega,$ $f(x)\neq0.$ Therefore, the pair $(f,\Omega)$ is called
\textit{$G$-admissible pair}. Let $\mathcal{M}^{G}(V,V)$ be the collection of all admissible $G$-pairs, define $\mathcal{M}^{G}:=\bigcup_{V}\mathcal{M}
^{G}(V,V)$, where the union is for all finite-dimensional orthogonal $G$-representations
$V$. 
\begin{theorem}
\label{def:GpropDeg} Let $G$-$\deg:\mathcal{M}^{G}\to
A(G)$ be a map assigning to each admissible 
$G$-pair 
$(f,\Omega)$ an element of Burnside Ring
$A(G)$, given by 
\begin{equation}
\label{eq:G-deg0}G\mbox{\rm -}\deg(f,\Omega)=\sum_{(H_{i})\in\Phi_{0}
(G)}n_{H_{i}}(H_{i})= n_{H_{1}}(H_{1})+\dots+n_{H_{m}}(H_{m}).
\end{equation}
It satisfies the properties of additivity, homotopy, normalization, as well as existence, product, suspension, recurrence formula, etc. (see \cite{newBook} for details).
We call $G\mbox{\rm -}\deg(f,\Omega)$ the \textit{$G$-equivariant
degree} (or simply \textit{$G$-degree}) of $f$ on $\Omega$.
\end{theorem}\vs 
Now, lets consider the $G$-isotypic decomposition of $V$: 
$V = \bigoplus_{i=0}^{r} V_i,$
where each $V_i$ is modeled on the irreducible $G$-representation $\cV_i$ for $i=0, 1, 2,\dots, r.$ Let $B(V)$ be the unit ball in $V$.
\begin{definition} The Brouwer $G$-equivariant degree
\begin{equation} \label{eq:basicdeg}
\deg_{\cV_i} := G\text{-}\deg(-\id, B(\cV_i)),
\end{equation}
is called the {\it $\cV_i$-basic degree} (or simply {\it basic degree}), and it can be computed by: $\deg_{\mathcal{V} _{i}}=\sum_{(K)}n_{K}(K),$
where
\begin{align}\label{eq:formula}
n_{K}=\frac{(-1)^{\dim\mathcal{V} _{i}^{K}}- \sum_{K<L}{n_{L}\, n(K,L)\, \left|  W(L)\right|  }}{\left|  W(K)\right|  }.
\end{align}
\end{definition}

\begin{lemma}\label{le:basic_coefficient}
If for $ (K_o) \in \Phi_0(G)$, one has $\text{coeff}^{L_o} (\deg_{\cV_i})$ is a leading coefficient of $\deg_{\cV_i}$, then $\dim (\cV_i ^{K_o})$ is odd and
\[
\text{coeff}^{K_o}(\deg_{\cV_i})=
\begin{cases}\label{eq:a0}
-1 & \text{if} \; |W(K_o)|=2,\\
-2 & \text{if}\; |W(K_o)|=1;\\
\end{cases}
\]
\end{lemma}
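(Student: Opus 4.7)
The plan is to apply the recurrence formula \eqref{eq:formula} from the top of the orbit-type lattice downward, interpreting ``leading coefficient'' as: $(L_o)$ is a maximal orbit type of non-zero vectors in $\cV_i$ lying in $\Phi_0(G)$ (i.e.\ $(L_o)$ is maximal in $\Phi_0(G;\cV_i\setminus\{0\})$, hence strictly below $(G)$). Under this reading, every $(K) \in \Phi_0(G)$ with $(K) > (L_o)$ and $(K) \neq (G)$ must satisfy $\cV_i^K = \{0\}$; indeed, otherwise a generic $v \in \cV_i^K \setminus \{0\}$ would have isotropy $(G_v) \geq (K) > (L_o)$ lying again in $\Phi_0(G;\cV_i\setminus\{0\})$, contradicting the maximality of $(L_o)$.

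First, since $\cV_i$ is a non-trivial irreducible $G$-representation, $\cV_i^G=\{0\}$, and applying \eqref{eq:formula} at $(G)$ with no larger orbit type yields $n_G = (-1)^0/|W(G)| = 1$. Next, I will prove by downward induction on $\mathcal{T} := \{(K)\in\Phi_0(G) : (K) > (L_o)\} \setminus \{(G)\}$ that $n_K = 0$ for every $(K) \in \mathcal{T}$. For such $K$, by the previous paragraph $\dim\cV_i^K = 0$; using $n(K,G) = |N(K,G)/N(G)| = |G/G| = 1$ and the inductive hypothesis $n_{K'} = 0$ for all $(K') \in \mathcal{T}$ with $(K') > (K)$, the recurrence collapses to
\[
n_K |W(K)| = 1 - n_G \cdot n(K,G) \cdot |W(G)| - 0 = 1 - 1 = 0,
\]
so $n_K = 0$.

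Finally, I will apply \eqref{eq:formula} at $(L_o)$. The only surviving term in the sum over $(K) > (L_o)$ is the one for $K=G$, so
\[
n_{L_o} |W(L_o)| = (-1)^{\dim\cV_i^{L_o}} - 1.
\]
The leading assumption forces $n_{L_o} \neq 0$, which forces the right-hand side to be non-zero, which in turn forces $\dim \cV_i^{L_o}$ to be odd. Hence $n_{L_o} |W(L_o)| = -2$, and integrality of $n_{L_o}$ forces $|W(L_o)| \in \{1,2\}$; the two cases yield exactly $n_{L_o} = -2$ and $n_{L_o} = -1$, as stated.

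The main technical point I expect to need care with is justifying rigorously that every $(K) \in \mathcal{T}$ has $\cV_i^K = \{0\}$; the clean argument above implicitly assumes that one can pass from $\cV_i^K \neq \{0\}$ to the existence of a non-zero $v$ with $(G_v) \in \Phi_0$, which may require analyzing the principal isotropy of the $W(K)$-action on $\cV_i^K$. Once that is pinned down, the rest is a telescoping of \eqref{eq:formula} that produces the $-2$ on the right-hand side of the final display and forces $|W(L_o)|$ to divide $2$.
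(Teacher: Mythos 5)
The paper states this lemma as a ``well-known fact'' and gives no proof, so there is nothing to compare against; judged on its own, your argument is correct and is the standard derivation from the recurrence formula \eqref{eq:formula}. Your reading of ``leading coefficient'' (non-zero coefficient of an orbit type $(L_o)$ maximal in $\Phi_0(G;\cV_i\setminus\{0\})$, with $(L_o)\neq(G)$, so $\cV_i$ is implicitly a non-trivial irreducible representation) is the intended one, and the three steps --- $n_G=1$, the telescoping $n_K=0$ for $(G)>(K)>(L_o)$, and $n_{L_o}|W(L_o)|=(-1)^{\dim\cV_i^{L_o}}-1$ forcing oddness and $|W(L_o)|\in\{1,2\}$ --- are all sound. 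The technical point you flag at the end is not actually an issue: you do not need a principal-isotropy analysis, since \emph{any} non-zero $v\in\cV_i^{K}$ has $G_v\supseteq K$, hence $(G_v)\geq(K)>(L_o)$ with $\dim W(G_v)\leq\dim W(K)=0$ by the monotonicity of $\dim W(\cdot)$ recalled in Appendix \ref{app:Eulerring}, and $(G_v)\neq(G)$ because $\cV_i^G=\{0\}$; this already contradicts the maximality of $(L_o)$. (Under the alternative reading where $(L_o)$ is maximal among orbit types with non-zero coefficient, the middle step is vacuous and the same conclusion falls out even faster.)
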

\begin{lemma}\label{le:involutive}
    For each  $\mathcal V_i$,  the corresponding basic degree $\deg_{\mathcal V_i} \in A(G)$ is an involution in the Burnside ring. It satisfies
    \[
    (\deg_{\cV_i})^2=\deg_{\cV_i} \cdot \deg_{\cV_i}=(G).
    \]\vs
\end{lemma}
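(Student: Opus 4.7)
The plan is to chain the defining properties of the Brouwer $G$-equivariant degree collected in Theorem \ref{thm:GpropDeg}: first the product property, then an equivariant linear-isomorphism homotopy, and finally the normalization axiom. Starting from the definition $\deg_{\cV_i}=G\text{-}\deg(-\id,B(\cV_i))$ and applying the product property (EB5), I obtain
\[
(\deg_{\cV_i})^2 \;=\; G\text{-}\deg\bigl((-\id)\times(-\id),\,B(\cV_i)\times B(\cV_i)\bigr).
\]
Since $B(\cV_i)\times B(\cV_i)$ is a bounded $G$-invariant open neighbourhood of the origin in $\cV_i\oplus\cV_i$ on which the only zero of $(-\id)\oplus(-\id)$ is $0$, additivity (EB2) lets me replace this neighbourhood by the standard unit ball $B(\cV_i\oplus\cV_i)$ without changing the degree.

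The key step is producing a $G$-equivariant admissible homotopy on $\cV_i\oplus\cV_i$ from $(-\id)\oplus(-\id)$ to $\id$. I would use the family of rotations
\[
R_\theta(v_1,v_2) := (\cos\theta\cdot v_1 - \sin\theta\cdot v_2,\; \sin\theta\cdot v_1 + \cos\theta\cdot v_2),\qquad \theta\in[0,\pi].
\]
Because $G$ acts diagonally on $\cV_i\oplus\cV_i$ and the coefficients of $R_\theta$ are scalars, one checks termwise that $R_\theta(g\cdot(v_1,v_2)) = g\cdot R_\theta(v_1,v_2)$, so $R_\theta$ is $G$-equivariant for every $\theta$. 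Moreover, each $R_\theta$ is a linear isomorphism, hence $R_\theta(x)=0 \Leftrightarrow x=0$, and the family $h_t := -R_{\pi(1-t)}$ is a $B(\cV_i\oplus\cV_i)$-admissible $G$-homotopy with $h_0 = (-\id)\oplus(-\id)$ and $h_1 = \id$.

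Homotopy invariance (EB3) then yields
\[
G\text{-}\deg\bigl((-\id)\oplus(-\id),\,B(\cV_i\oplus\cV_i)\bigr) \;=\; G\text{-}\deg\bigl(\id,\,B(\cV_i\oplus\cV_i)\bigr),
\]
and the normalization axiom (EB4) identifies the right-hand side with $(G)$. Combining the three displayed equalities gives $(\deg_{\cV_i})^2=(G)$, as claimed. The only delicate point is the verification that the rotation $R_\theta$ is genuinely $G$-equivariant and that the homotopy remains $B(\cV_i\oplus\cV_i)$-admissible for all $\theta$; both reduce to the elementary observation that $R_\theta$ is a linear isomorphism commuting with the diagonal $G$-action, so there are no genuine technical obstructions.
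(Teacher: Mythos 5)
Your argument is correct, and it is worth noting that the paper itself offers no proof of this lemma: it is simply listed under ``the following facts are well-known,'' so there is no in-text argument to compare against. Your chain --- product property to reduce $(\deg_{\cV_i})^2$ to $G\text{-}\deg\bigl((-\id)\times(-\id),B(\cV_i)\times B(\cV_i)\bigr)$, excision to pass to the ball $B(\cV_i\oplus\cV_i)$, the planar rotation $R_\theta$ acting blockwise on $\cV_i\oplus\cV_i$ to connect $(-\id)\oplus(-\id)$ to $\id$ through $G$-equivariant linear isomorphisms, and then homotopy invariance plus normalization --- is exactly the standard proof of this fact, and every step is justified by the axioms listed in Theorem \ref{thm:GpropDeg}. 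The equivariance of $R_\theta$ does hold for the reason you give (its matrix has scalar blocks, so it commutes with the diagonal linear $G$-action), and admissibility is clear since each $R_\theta$ is an isomorphism.

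One trivial slip: with your convention $h_t=-R_{\pi(1-t)}$ the endpoints come out as $h_0=\id$ and $h_1=(-\id)\oplus(-\id)$, the reverse of what you wrote; since homotopy invariance is symmetric in the endpoints this has no effect on the conclusion, but you should fix the labels (or set $h_t=-R_{\pi t}$).
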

\vs

\vskip.3cm


\subsection{$G$-Equivariant Gradient Degree and Its Computation}
In this section, we recall the essential information of our main tool, $G$-Equivariant Gradient Degree, with particular focus on computation of basic gradient degree for $G:=O(3)\times \Gamma (\Gamma\;\; \text{is finite}).$ Note, in our setting $\Gamma=S_4.$ For more details, one is referred to Section 6 of \cite{DK}.\vs
 Let $V$ be an orthogonal $G$-representation, and denote by $C^{2}_{G}
(V,\mathbb{R})$ the space of $G$-invariant real $C^{2}$-functions on $V$. Given 
$\varphi\in C^{2}_{G}(V,\mathbb{R})$ and open, bounded, $G$-invariant subset $\Omega\subset V$ such that $\nabla\varphi(x)\not =0$ for $x\in\partial\Omega$, the pair $(\nabla\varphi,\Omega)$ is said to be \textit{$G$-gradient
$\Omega$-admissible}. Let $\mathcal{M}^{G}_{\nabla}(V,V)$ be the set of all
such $G $-gradient $\Omega$-admissible pairs, define  
\[
\mathcal{M}^{G}_{\nabla}:=\bigcup_{V}\mathcal{M}^{G}_{\nabla}(V,V).
\]
Then we have the following result for $G$-Equivariant gradient degree.
\begin{theorem}
\label{thm:Ggrad-properties}{(cf. \cite{G,DK})} There is a unique map
$\nabla_{G}\mbox{\rm -}\deg:\mathcal{M}_{\nabla}^{G}\to U(G)$, assigning to each $(\nabla\varphi,\Omega)\in\mathcal{M}^{G}_{\nabla}$ an element
in $U(G),$ called the
\textit{$G$-equivariant gradient degree} of $\nabla\varphi$ on $\Omega.$
\begin{equation}
\label{eq:grad-deg}\nabla_{G}\mbox{\rm -}\deg(\nabla\varphi,\Omega)=
\sum_{(H_{i})\in\Phi(\Gamma)}n_{H_{i}}(H_{i})= n_{H_{1}}(H_{1})+\dots
+n_{H_{m}}(H_{m}).
\end{equation}
It satisfies the following properties:

\begin{enumerate}
\item \textrm{\textbf{(Existence)}} If $G\mbox{\rm -}\deg(f,\Omega)\neq0$,
i.e., \eqref{eq:G-deg0} contains a non-zero coefficient $n_{H_{i}}$, then
$\exists_{x\in\Omega}$ such that $f(x)=0$ and $(G_{x})\geq(H_{i})$.
\item \textrm{\textbf{(Additivity)}} Let $\Omega_{1}$ and $\Omega_{2}$ be two
disjoint open $G$-invariant subsets of $\Omega$ such that $(\nabla
\varphi)^{-1}(0)\cap\Omega\subset\Omega_{1}\cup\Omega_{2}$. Then,
\[
\nabla_{G}\mbox{\rm -}\deg(\nabla\varphi,\Omega)= \nabla_{G}\mbox{\rm -}\deg
(\nabla\varphi,\Omega_{1})+ \nabla_{G}\mbox{\rm -}\deg(\nabla\varphi
,\Omega_{2}).
\]

\item \textrm{\textbf{(Homotopy)}} If $\nabla_{v}\Psi:[0,1]\times V\to V$ is a
$G$-gradient $\Omega$-admissible homotopy, then
\[
\nabla_{G}\mbox{\rm -}\deg(\nabla_{v}\Psi(t,\cdot),\Omega)=\mathrm{constant}.
\]

\item \textrm{\textbf{(Normalization)}} Let $\varphi\in C^{2}_{G}
(V,\mathbb{R})$ be a special $\Omega$-Morse function such that $(\nabla
\varphi)^{-1}(0)\cap\Omega=G(v_{0})$ and $G_{v_{0}}=H$. Then,
\[
\nabla_{G}\mbox{\rm -}\deg(\nabla\varphi,\Omega)= (-1)^{{m}^{-}(\nabla
^{2}\varphi(v_{0}))}\cdot(H),
\]
where ``$m^{-}(\cdot)$'' stands for the total dimension of eigenspaces for
negative eigenvalues of a (symmetric) matrix.

\item \textrm{\textbf{(Functoriality Property)}} Let $V$ be an orthogonal
$G$-representation, $f:V\rightarrow V$ a $G$-gradient $\Omega$-admissible map,
and $\psi:G_{0}\hookrightarrow G$ an embedding of Lie groups. Then, $\psi$
induces a $G_{0}$-action on $V$ such that $f$ is an $\Omega$-admissible
$G_{0}$-gradient map, and the following equality holds
\begin{equation}
\Psi\lbrack\nabla_{G}\mbox{\rm -}\deg(f,\Omega)]=\nabla_{G_{0}}
\mbox{\rm -}\deg(f,\Omega), \label{eq:funct-G}%
\end{equation}
where $\Psi:U(G)\rightarrow U(G_{0})$ is the homomorphism of Euler rings
induced by $\psi$.

\item \textrm{\textbf{(Reduction Property)}} Let $V$ be an orthogonal
$G$-representation, $f:V\to V$ a $G$-gradient $\Omega$-admissible map, then
\begin{equation}
\label{eq:red-G}\pi_{0}\left[  \nabla_{G}\mbox{\rm -}\deg(f,\Omega)\right]
=G\mbox{\rm -}\deg(f,\Omega).
\end{equation}
where the ring homomorphism $\pi_{0}:U(G)\to A(G)$ is given by \eqref{eq:pi_0-homomorphism}.
\end{enumerate}
\end{theorem}
For details of some other properties, one can see \cite{DK}. We next introduce its computation via standard linearization technique.\vs Let $A:V\to V$ be a $G$-equivariant symmetric isomorphism of $V$ and $A:=\nabla\varphi$, where $\varphi(x)=\frac12 Ax\bullet x$. Consider the
$G$-isotypical decomposition of $V:$ $V=\bigoplus_{i}V_{i},\quad V_{i}\;\;\mbox{modeled on}\;\mathcal{V}_{i}.$ We denote by $\sigma(A)$ the
spectrum of $A$ and $\sigma_{-}(A)$ its negative spectrum. Let  $E_{\mu}(A)$ be the eigenspace corresponding to $\mu
\in\sigma(A)$. Put $\Omega:=\{x\in V: \|x\|<1\}$. Then, $A$ is $\Omega
$-admissibly homotopic (in the class of gradient maps) to a linear operator
$A_{o}:V\to V$ such that $A_{o}|_{E_{\mu}(A)}=-\text{\textrm{Id}}$ for $\mu\in
\sigma_{-}(A)$ and $A_{o}|_{E_{\mu}(A)}=\text{\textrm{Id}}, \;\mu\in\sigma(A)\setminus\sigma_{-}(A).$ \vs \noi Suppose $\mu\in\sigma_{-}(A)$ and
denote by $m_{i}(\mu)$ the integer
$m_{i}(\mu):=\dim(E_{\mu}(A)\cap V_{i})/\dim\mathcal{V}_{i},$
which is called the \textit{$\mathcal{V}_{i}$-multiplicity} of $\mu$. Since
$\nabla_{G}\mbox{\rm -}\deg(\text{\textrm{Id}},\mathcal{V}_{i})=(G)$ is the
unit element in $U(G)$, we derive that
\begin{equation}
\label{eq:grad-lin}\nabla_{G}\mbox{\rm -}\deg(A,\Omega)= \prod_{\mu\in
\sigma_{-}(A)}\prod_{i} \left[  \nabla_{G}\mbox{\rm -}\deg(-\text{\textrm{Id}
},B(\mathcal{V}_{i}))\right]  ^{m_{i}(\mu)}.
\end{equation}
\begin{definition} \label{de:basicequi}
\textrm{\ Assume that $\mathcal{V}_{i}$ is an irreducible $G$-representation.
Then, the $G$-equivariant gradient degree:
\[
\nabla_{G}\text{-}\deg_{\mathcal{V}_{i}}:=\nabla_{G}\mbox{\rm -}\deg(-\text{\text{\rm Id}}
,B(\mathcal{V}_{i}))\in U(G)
\]
is called the \textit{gradient $G$-equivariant basic degree} for
$\mathcal{V}_{i}$. }
\end{definition} 
\begin{proposition}
\label{prop:invertible} The basic gradient degrees $\nabla_{G}\text{-}\deg_{\mathcal{V}_{i}}$ are invertible elements in $U(G)$.
\end{proposition}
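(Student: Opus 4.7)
\textbf{Proof plan for Proposition \ref{prop:invertible}.} The plan is to exploit the ring homomorphism $\pi_{0}:U(G)\to A(G)$ and reduce invertibility in the Euler ring to invertibility in the Burnside ring, where the basic Brouwer degrees are known to be involutive by Lemma \ref{le:involutive}. First I would decompose
\[
\nabla_{G}\text{-}\deg_{\mathcal{V}_{i}}=\alpha+\beta,
\]
where $\alpha$ collects all the terms whose orbit types $(H)$ lie in $\Phi_{0}(G)$ (i.e. $\dim W(H)=0$), and $\beta$ collects those with $\dim W(H)>0$. Applying the Reduction Property in Theorem \ref{thm:Ggrad-properties} gives $\pi_{0}(\nabla_{G}\text{-}\deg_{\mathcal{V}_{i}})=\deg_{\mathcal{V}_{i}}$, so $\alpha$ coincides (as an integer combination of generators) with $\deg_{\mathcal{V}_{i}}\in A(G)$.

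Next I would compute the product $\nabla_{G}\text{-}\deg_{\mathcal{V}_{i}}\ast\alpha$ in $U(G)$. Since $\pi_{0}$ is a ring homomorphism and $\deg_{\mathcal{V}_{i}}\cdot\deg_{\mathcal{V}_{i}}=(G)$ by Lemma \ref{le:involutive}, the image under $\pi_{0}$ is exactly $(G)$. Hence all of the terms in $\nabla_{G}\text{-}\deg_{\mathcal{V}_{i}}\ast\alpha$ indexed by orbit types outside $\Phi_{0}(G)$ form a remainder $\gamma$ with
\[
\nabla_{G}\text{-}\deg_{\mathcal{V}_{i}}\ast\alpha=(G)+\gamma,\qquad \gamma=\sum_{\dim W(L)>0} m_{L}(L).
\]
For the piece $\beta\ast\alpha$ this uses the standard fact recalled in Appendix \ref{app:Euler}: if $(L)\le(H)$ then $\dim W(L)\ge\dim W(H)$, so multiplying any generator with positive-dimensional Weyl group by anything produces only such generators.

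Then I would invoke the Corollary after Proposition \ref{pro:nilp-elements}, which asserts that any finite integer combination of conjugacy classes with positive-dimensional Weyl groups is a nilpotent element of $U(G)$. Applied to $\gamma$, this yields $\gamma^{N}=0$ for some $N$, and thus
\[
\bigl((G)+\gamma\bigr)^{-1}=\sum_{j=0}^{N-1}(-1)^{j}\gamma^{j}
\]
is a well-defined element of $U(G)$. Combining these steps,
\[
\nabla_{G}\text{-}\deg_{\mathcal{V}_{i}}\ast\Bigl[\alpha\ast\bigl((G)+\gamma\bigr)^{-1}\Bigr]=(G),
\]
and since $U(G)$ is commutative, this two-sided inverse completes the proof.

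The only delicate step is step two, namely verifying that $\pi_{0}$ really does transport the involutivity in $A(G)$ into the identity $\nabla_{G}\text{-}\deg_{\mathcal{V}_{i}}\ast\alpha=(G)+\gamma$ with $\gamma$ supported on positive-dimensional Weyl groups; this is exactly where one must be careful that $A(G)$ is not a subring of $U(G)$, so one cannot naively transport the equality $\alpha\cdot\alpha=(G)$, but must work modulo the ideal of elements with $\dim W(L)>0$, which is precisely the kernel of $\pi_{0}$. Once this is clear, nilpotency from Proposition \ref{pro:nilp-elements} finishes the argument.
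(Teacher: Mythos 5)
Your proposal is correct and follows essentially the same route as the paper: both reduce to the Burnside ring via the homomorphism $\pi_{0}$, invoke the involutivity $\deg_{\mathcal{V}_{i}}\cdot\deg_{\mathcal{V}_{i}}=(G)$ in $A(G)$, and then invert a perturbation of $(G)$ by a nilpotent element supported on orbit types with positive-dimensional Weyl group via a finite geometric series. The only (cosmetic) difference is that the paper squares $\nabla_{G}\text{-}\deg_{\mathcal{V}_{i}}$ itself, obtaining $(G)-\alpha$ with $\alpha\in\ker\pi_{0}$, whereas you multiply by the $\Phi_{0}$-truncation; both yield the inverse by the same nilpotency argument.
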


\begin{proof}
Let $a:=\pi_{0}(\nabla_G\text{\textrm{-deg\,}}_{\cV_{i}})$, then $a^{2}=(G)$ in $A(G)$ (see Lemma \eqref{le:involutive}),
which implies that $(\nabla_{G}\text{\textrm{-deg\,}}_{\mathcal{V}_{i}})^{2}=(G)-\alpha$,
where for every $(H)\in\Phi_{0}(G)$ one has coeff$^{H}(\alpha)=0$. It is
sufficient to show that $(G)-\alpha$ is invertible in $U(G)$. Since (by
Proposition \ref{pro:nilp-elements}) for sufficiently large $n\in{\mathbb{N}}%
$, $\alpha^{n}=0$, one has
\[
\big((G)-\alpha)\sum_{n=0}^{\infty}\alpha^{n}=\sum_{n=0}^{\infty}\alpha
^{n}-\sum_{n=1}^{\infty}\alpha^{n}=(G),
\]
where $\alpha^{0}=(G)$
\end{proof}\vs
In order to easier formulate our arguments, we put
\begin{equation}\label{eq:maxG}
{\max}^G_0(\cV_i):={\max}^G\left(\pi_0\left(\nabla_G\text{-deg}_{\cV_i}-(G)\right)\right)=
{\max}^G\left(G\text{-deg}_{\cV_i}-(G)\right).
\end{equation}\vskip 0.3cm 
\noi{\bf Computations of basic gradient degrees}:
We are motivated by the computations of the basic gradient degrees for the group $G:=O(3)\times \Gamma$ (where $\Gamma$ is finite) using the reduction to the subgroup $G^{\prime}=S_4^p \times \Gamma$ by applying the Euler homomorphism $\Psi:U(  O(3)\times \Gamma)\to U(S_4^p \times \Gamma)$. Since $G':=S_4^p \times \Gamma$ is finite, $U(S_4^p \times \Gamma)=A(S_4^p \times \Gamma)$ and
$\nabla_{G'}\text{-deg}(f,\Omega)=G'\text{-deg}(f,\Omega)$ (for a $G'$-reperesentation $U$, $f=\nabla \vp$,  and $\Om\subset U$), thus
all the basic degrees can be effectively computed using GAP software. Notice that by the  functoriality property of the gradient degree
\begin{equation*}
\Psi\left[\nabla_{G}\mbox{\rm -}\deg(-\id, B_1(0))\right]=\nabla_{G'}
\mbox{\rm -}\deg(-\id,B_1(0)), %
\end{equation*}
where $B_1(0)\subset \mathcal V_i$ denotes the unit ball in an irreducible $G$-representation $\mathcal V_i$, one can use the Euler ring homomorphism $\Psi$ to derive information of $\nabla_{G}\mbox{\rm -}\deg(-\id, B_1(0))$.
 If, by chance, $\cV_i=:\cU_j$ is also an irreducible $G'$-representation, then, in particular, we have
\[
\Psi\left[ \nabla_G\text{\textrm{-deg\,}}_{\cV_{i}}  \right] =\nabla_{G'}\text{\textrm{-deg\,}}_{\cU_{j}}
\]
In our case we are only interested in computing the coefficients of
\[\pi_0( \nabla_G\text{\rm-deg}_{\cV_{i}})=\gdeg(-\id,B_1(0))=\deg_{\cV_i}\]
corresponding to the orbit types in ${\max}_0^G(\cV_i)$.
\vs

\begin{remark}\label{rm:23}
   \rm Let $V$ be a $G_1$-representation and $W$ be a $G_2$-representation. Suppose $(K)$ is maximal orbit type in $V$ and $(H)$ is maximal orbit type in $W$ such that $\dim V^K=\dim W^H=1.$ Then both $N(K)$ acts on $V^K$, $N(H)$ acts on $W^H$ non-trivially if $N(K)\neq K, N(H)\neq H$.\vs
So, there exists
\begin{align*}
    \varphi_1&:N(K)\to O(1)=\mathbb Z_2,\;\;\; \ker \varphi_1=K\\
    \varphi_2&:N(H)\to O(1)=\mathbb Z_2,\;\;\; \ker \varphi_2=H
\end{align*}
and for $x\neq 0, x\in V^K$ and $y\neq 0, y\in W^H,$ one has
\begin{align*}
    (k,h)\in K\times H\implies (k,h)(x\otimes y)=kx\otimes hy=\varphi_1(k)x\otimes \varphi_2(h)y
\end{align*}
Therefore,
\[
(k,h)(x\otimes y)=(x\otimes y) \iff \varphi_1(k)=\varphi_2(h).
\]
i.e. for $G=G_1\times G_2,$
\[
G_{x\otimes y}=N(K)^{\varphi_1}\times ^{\varphi_2}N(H)
=N(K)^K\times ^H N(H).
\]
By maximality of $(K)$ and $(H)$, $N(K)^K\times ^H N(H)$ is also maximal.
\end{remark}
The computations can be illustrated on the following example:
\vs

\begin{example}\label{ex:basic_degree}\rm  Consider the group $G:=O(3)$ and $G':=S_4^p$ with ${\bm\cV}_i:=\cV_1$ being the natural representation of $O(3)$ and $\cU_j:=\cW_{3}^-$. Then, by direct computations through G.A.P, one has
\begin{equation}
\nabla_{G'}\text{\textrm{-deg\,}}_{\cW_{3}^-}=\deg_{\cW_3^-}=(S_4^p)-(D_4^z)-(D_3^z)-(D_2^d) +2(D_1^z)+(\bz_2^-)-(\bz_1),
\end{equation}
where $(D_4^z)$, $(D_3^z)$, $(D_2^d)\in {\max}_0^{S_4^p}(\cW_3^-)$. \vs
\noi Moreover, one also has
\begin{equation}
\nabla_G\text{\textrm{-deg\,}}_{\cV_{1}}=\deg_{{\cV}_1}=(O(3))-(O(2)^-),
\end{equation}
where $(O(2)^-)\in {\max}_0^{O(3)}(\cV_1)$. Indeed, the $SO(3)$-basic degrees are well-defined in \cite{AED}. However, in order to understand better the structure of $O(3)$, recall that
\[
\Phi_0(SO(3))=\Big\{(SO(3)),(A_5),(S_4), (A_4), (D_n), (SO(2)), (O(2))\Big\}
\]
Notice that $O(2)\leq SO(3)$ consists of matrices:
\begin{align*}
    A&=\begin{bmatrix}
        e^{i\theta}&0\\
        0&1
    \end{bmatrix}\quad \quad
     B=\begin{bmatrix}
        e^{i\theta}\kappa&0\\
        0&-1
    \end{bmatrix},\;\;\; \;\;
\end{align*}
where
\begin{equation*}
e^{i\theta}=\begin{bmatrix}
        \cos\theta & -\sin \theta\\
        \sin \theta & \cos\theta
    \end{bmatrix},\;\;\;\;
\kappa=\begin{bmatrix}
    1&0\\0&-1\\
\end{bmatrix}.
\end{equation*}
Take $p=(0,0,1)\in S^3$, and $\tilde{G}=SO(3).$ Then one can easily observe that
\[
\tilde{G}_p=SO(2)
\]
is a rotation about $z$-axis and $Bp=-p.$ \vs
For $O(3)=SO(3)\times \mathbb Z_2,$ one has that $\mathbb Z_2=\{\id,-\id\}$ and every subgroup $H\leq O(3)$ can be described as a product group $ H=\mathcal H\times \mathbb Z_2$ or a twisted subgroup of the product $SO(3)\times \mathbb Z_2,$ i.e.
\[
 H\leq SO(3)\times \mathbb Z_2,\;\; H=\mathcal H^{\varphi},
\]
where $\mathcal H\leq SO(3)$ and $\varphi: \mathcal H\to \mathbb Z_2$ homomorphism.\vs
\noi Now take $p=(0,0,1)\in S^3$ and $G=O(3),$ then one has
$G_p=O(2)^-.$
We know that $\tilde{G}_p\subset G_p,$ but it is clear that for $B\in O(2)\setminus SO(2),$ one has $\varphi(B)=-1,$ thus $B\in G_p$, and
\[
SO(2)\times \{\id\} \cup \Big(O(2)\setminus SO(2)\Big)\times \{-\id\}=O(2)^-.
\]
Now one needs to derive the leading coefficient $\text{coeff}^{O(2)^-}(\deg_{\cV_1}).$ Notice that $\dim \cV_1^{O(2)^-}=1,$ applying formula \eqref{eq:formula}, one has
\[
\nabla _G\text{\textrm{-deg\,}}_{\cV_{1}^-}=\deg_{{\cV}_1}=(O(3))+x(O(2)^-),
\]
where $x=\frac{(-1)^{\dim \cV_1^{O(2)^-}}-1}{|W(O(2)^-)|}=\frac{-2}{2}=-1.$ \vs
Then, clearly, for $\psi:S_4^p\to O(3)$, one has
\[
\Psi(O(3))=S_4^p,\quad \Psi((O(2)^-)=(D_4^z)+(D_3^z)+(D_2^d)+\beta.
\]
Indeed, notice that the conjugacy classes $(D_4^z),(D_3^z), (D_2^d)$ are not orbit types in $\cV_1.$ Since $\cV_1$ has only two orbit types $(O(3))$ and $(O(2)^-),$ by homomorphism theorem,
\[
\Psi \Big(G\mbox{\rm -}\deg(-\id,B(\cV_1)\Big)=G'\mbox{\rm -}\deg(-\id, B(\cW_3^-)),
\]
Given that $(O(2)^-)\in \max^G(\mathcal V_i)$ and $(H')\in {\max}^{G'}(\mathcal{U}_{j})$ (see \eqref{eq:RingHomomorphism}), where $H':=gO(2)^{-}g^{-1}\cap G'$. Then by the maximality of $(H')$ (see Lemma \eqref{le:basic_coefficient}), one has
\[
\textrm{coeff}^{H'}(\nabla_{G'}\mbox{\rm -}\deg_{\mathcal{\cU}_{j}})=
    \begin{cases}
    -2 \;\; \text{if  } W_{G'}(H')=\mathbb{Z}_1\\
    -1 \;\; \text{if  } W_{G'}(H')=\mathbb{Z}_2
    \end{cases}
\]

\vs
In the case $G:=O(3)\times S_4$ and $G':=S_4^p\times S_4$, assume that
${\bm \cV}_i:=\cV_1\otimes \cW_4$ and $\cU_j:=\cW_{3}^-\otimes \cW_{4}$.
Then we have
\begin{align*}
{\max}^{S_4}({\cW_4})&=\{(D_3), (D_2)\},\;\;
{\max}^{S_4^p}({\cW_3^-})=\{(D_4^z), (D_3^z), (D_2^d)\},\\
{\max}^{O(3)}(\cV_1)&=\{(O(2)^-)\}.
\end{align*}
Take, $(H)\in {\max}^{S_4}({\cW_4})$ and $(K)\in {\max}^{S_4^p}({\cW_3^-})$. Since
\[
\dim \cW_4^H=1 \quad \text{ and } \quad  \dim (\cW_3^-)^K=1
\]
then $(\cW_3^-)^K\otimes \cW_4^H$ is a fixed point space of the subgroup
\[
\mathscr H':=\begin{cases}
N(K)^K\times ^HN(H),& \text{ if } \; W(K)=W(H)=\bz_2,\\
K\times H, &\text { if } \; \text{ otherwise},
\end{cases}
\]
 and by maximality assumptions, $(\mathscr H')\in {\max}^{S_4^p\times S_4}(\cW_{3}^-\otimes \cW_{4})$ (see Remark \ref{rm:23}). This leads us to the following maximal orbit types in ${\max}^{S_4^p\times S_4}(\cW_{3}^-\otimes \cW_{4})$:
\begin{align*}
({D_4^p}^{D_4^z}\times ^{D_2} D_4), \quad  ({D_3^p}^{D_3^z}\times ^{D_2} D_4), \quad
 ({D_2^p}^{D_2^d}\times ^{D_2} D_4)\\
 (D_4^z\times D_3), \quad  (D_2^d\times D_3), \quad (D_3^z\times D_3).
\end{align*}
The orbit type $(D_1)$ in $\mathcal W_4$ is not maximal, however the orbit types
\[
({D_4^p}^{D_4^z}\times ^{D_1} D_2),\quad ({D_3^p}^{D_3^z}\times ^{D_1} D_2), \quad ({D_2^p}^{D_2^d}\times ^{D_1} D_2)
\]
are maximal in $\cW_{3}^-\otimes \cW_{4}$, i,.e.
\[
\{({D_4^p}^{D_4^z}\times ^{D_1} D_2), ({D_3^p}^{D_3^z}\times ^{D_1} D_2), ({D_2^p}^{D_2^d}\times ^{D_1} D_2)\}\subset
{\max}^{S_4^p\times S_4}(\cW_{3}^-\otimes \cW_{4}).
\]
In addition, in a similar way, one can notice that $(\bz_2^-)$ is an orbit type in $\cW_3^-$ which is not maximal but there are two (non-conjugate) homomorphisms $\vp_1$, $\vp_2:D_4^p\to D_4$ with $\ker \vp_1=\ker \vp_2=\bz_2^-$,  such that
\[
(({D_{4}^{p}}^{^{{\mathbb{Z}}_{2}^{-}}}\times{_{D_{4}}}D_{4})_1:=({D_{4}^{p}}^{^{\vp_1}}\times _{D_4}D_4), \quad ({D_4^p}^{^{\bz_2^-}}\times _{D_4}D_4)_2:=({D_4^p}^{^{\vp_2}}\times _{D_4}D_4)
\]
are also maximal in $\cW_{3}^-\otimes \cW_{4}.$ 
\vs

Finally, by inspection, one can easily recognize that
\[
(S_4^-\times _{S_4}S_4), \quad \text{ and } \quad (D_3\times _{D_3}D_3)
\]
are also maximal in $\cW_{3}^-\otimes \cW_{4}$.
\vs

On the other hand, one can recognize that the orbit types
\begin{gather*}
(O(2)^-\times D_3), \quad ({O(2)^p}^{O(2)^-}\times ^{D_2}D_4), \quad
({O(2)^p}^{O(2)^-}\times ^{D_1}D_2),\\
 ({D_4^p}^{^{\bz_2^-}}\times _{D_4}D_4), \quad  (S_4^-\times _{S_4}S_4),\quad  (D_3\times _{D_3}D_3)
\end{gather*}
are maximal orbit types  $(\mathscr H')$ in $\mathcal V_1\otimes \cW_4$.
Since $\nabla_{G'}\text{-deg}_{\mathcal{W}_{3}^{-}\otimes\mathcal{W}_{4}}  $ is given by \eqref{eq:g1},  we have
\begin{align*}
\Psi(O(2)^-\times D_3))&= (D_4^z\times D_3)+ (D_2^d\times D_3)+ (D_3^z\times D_3)+\alpha\\
\Psi({O(2)^p}^{O(2)^-}\times ^{D_2}D_4)&=({D_4^p}^{D_4^z}\times ^{D_2} D_4)+
 ({D_2^p}^{D_2^d}\times ^{D_2} D_4)+({D_3^p}^{D_3^z}\times ^{D_2} D_4)+\alpha\\
\Psi( {O(2)^p}^{D_2^d}\times ^{D_1} D_2)&=({D_4^p}^{D_4^z}\times ^{D_1} D_2)+
 ({D_2^p}^{D_2^d}\times ^{D_1} D_2)+({D_3^p}^{D_3^z}\times ^{D_1} D_2)+\alpha
\end{align*}
Consequently,
\begin{align}
\nabla_{G}\text{-deg}_{\mathcal{V}_{1}\otimes\mathcal{W}_{4}}  &
=(G)-({D_{4}^{p}}^{^{{\mathbb{Z}}_{2}^{-}}}\times{_{D_{4}}}D_{4})-(O(2)^-\times D_3))\label{basicdegree_2}
-( {O(2)^p}^{D_2^d}\times ^{D_1} D_2)\\
& -({O(2)^p}^{O(2)^-}\times ^{D_2}D_4)%
-(S_{4}^{-}\times_{S_{4}%
}S_{4})-({D_{3}}\times
{_{D_{3}}}D_{3})+\bm\beta.\nonumber
\end{align}
\end{example}
\vskip.3cm
\black
\newpage


\end{document}